\documentclass{article}
\usepackage{graphicx}
\usepackage{amsmath}
\usepackage{amssymb}
\usepackage{amsthm}
\usepackage{amsfonts}
\usepackage{mathrsfs}
\usepackage{verbatim}
\usepackage{stmaryrd}
\usepackage{wasysym}
\def\Underline{\setbox0\hbox\bgroup\let\\\endUnderline}
\def\endUnderline{\vphantom{y}\egroup\smash{\underline{\box0}}\\}
\def\|{\verb|}

\newtheorem{theorem}{Theorem}
\newtheorem{lemma}{Lemma}
\newtheorem{corollary}{Corollary}
\newtheorem{definition}{Definition}

\newtheorem{axiom}{Axiom}

\newcommand{\nplay}{\widetilde{\mathbb{N}{\rm p}}}
\newcommand{\afimp}{\widetilde{\mathbb{I}{\rm m}}}

\newcommand{\limp}{\widetilde{\mathbb{LI}{\rm m}}}
\newcommand{\ims}{\widetilde{\mathbb{I}{\rm ms}}}

\newcommand{\elimp}{\langle \ims^\infty \cup \limp \rangle}
\newcommand{\nim}{\mathbb{N}{\rm im}}
\newcommand{\moon}{\rightmoon}
\newcommand{\minfty}{\overline{\infty}}
\def\conj#1{\overleftrightarrow{#1}}
\def\lopt#1{#1^{\mathrm{L}}}
\def\ropt#1{#1^{\mathrm{R}}}

\def \sglstar{*}
\def \nimber#1{* #1}
\def \eqeq{\asymp}
\def \mex{{\rm mex}}

\def \spmoon{\rrparenthesis}

\begin{document}

\title{What happens when we add impartial loopy games and impartial entailing games?}

\author{Koki Suetsugu}

\maketitle

\begin{abstract}
The disjunctive sum of impartial games is analyzed by Sprague-Grundy theory. The theory has been extended to loopy games and entailing games by early results. In this study, we consider further extension of this theory and show partial algebraic structure of the sum of loopy positions and entailing positions.
\end{abstract}
\section{Introduction}
Let us consider a simple piece-moving ruleset {\sc tripiece}.
There is a rectangular board with several pieces on it, including round, triangular, and square pieces.
Each player, on their turn, pick one piece and move it to up or to left. One piece can move to a place even other piece is already placed on there.
Further, on each place $(x,y)$ that satisfies $x+y=3$, something special happens to the triangular and square pieces.
\begin{itemize}
    \item [(a)]
    If a triangular piece is on $(x,y)(x+y = 3),$ instead of the usual vertical or horizontal move, the player can move it to $(x+1, y-1)$ or $(x-1, y+1)$ unless the place is outside of the board.
    \item [(b)]
    If a player moves a square piece to $(x, y)(x+y = 3),$ the player can choose ending the move or picking any piece and making one additional move.
\end{itemize}

The player who cannot move on their turn loses the game.

Figure \ref{fig:trpiece} is an example of a position of this ruleset.
Then who can win? Or is the best result of each player draw?

\begin{figure}
    \centering

    \begin{tabular}{|c||c|c|c|c|c|c|c|c|c|}\hline
         &\ 0~ &\ 1~ &\ 2~ &\ 3~ &\ 4~ &\ 5~ &\ 6~ &\ 7~ \\ \hline\hline
        0 &   &   &   &   &   &   &   &    \\ \hline
        
        1 &   &   & $\bullet$  &   &   &   &  $\blacktriangle$ &  \\ \hline 
        
        2 &   &   &   &   &   &   &    &    \\ \hline
        3 &   &   &   &  $\blacktriangle$ &   &    &   &    \\ \hline
        4 &   &   &   &   &   &   &   &    \\ \hline
         5 &   &   &   &   & $\bullet \blacksquare$  &   &   &    \\ \hline
         6 &   &   &   & $\blacksquare$  &   &   &   &   \\ \hline
         7 &   &   &   &   &   &   &   &    \\ \hline
                
    \end{tabular}
    
    \caption{Position and cordinate of {\sc tripiece}.}
    \label{fig:trpiece}
\end{figure}

The place where one piece can move is independent from each other. Thus, this game can be considered as a disjunctive sum of single-piece positions. However, since triangular pieces and square pieces have special moves, we cannot use standard Sprague-Grundy theory. For impartial games with such that one position can happen more than twice, like the move of triangular pieces, Sprague-Grundy theory is extended in \cite{Smi66}, \cite{FT75, FP73, FY86}.
For impartial games with carry-on moves like square pieces, Sprague-Grundy is also extended in \cite{LNS21}.
However, both has never been combined. In this study, we show partial construction of adding both games with loopy moves and games with carry-on moves.

\subsection{Early result: impartial games}

In combinatorial game theory, we say one of the both players Left (female) and the other Right (male).
For any position $G$, $\lopt{G}$ is a Left option of $G$ if Left can move from $G$ to $\lopt{G}$. Similarly, $\ropt{G}$ is a Right option of $G$ if Right can move from $G$ to $\ropt{G}$.

\begin{definition}
For any positions, 
    $\lopt{G}_1, \lopt{G}_2, \ldots, \lopt{G}_n, \allowbreak \ropt{G}_1, \ropt{G}_2, \ldots, \ropt{G}_m$, 
    $$\{\lopt{G}_1, \lopt{G}_2,\allowbreak \ldots, \lopt{G}_n \mid \ropt{G}_1, \ropt{G}_2, \ldots, \ropt{G}_m\}$$ is also a position. 
    In this form, the set of Left options $G^\mathcal{L} = \{\lopt{G}_1, \lopt{G}_2, \ldots, \lopt{G}_n \}$ and the set of Right options $G^\mathcal{R} = \{\ropt{G}_1, \ropt{G}_2, \ldots, \ropt{G}_m\}$ can be empty sets.
    
    We may write this position as $\{G^\mathcal{L} \mid G^\mathcal{R}\}$ 
\end{definition}

Every position $G$ belongs its outcome $o(G) \in \{\mathcal{L}, \mathcal{R}, \mathcal{N}, \mathcal{P}\}$. $o(G) = \mathcal{L}$ means that in $G$ Left has a winning strategy regardless of whether she is the next player or the previous player. Similarly, $o(G) = \mathcal{R}, o(G) = \mathcal{N}$, and $o(G) = \mathcal{P}$ mean that in $G$ Right, the next player and the previous player has a winning strategy, respectively.

\begin{definition}
For any positions $G$ and $H$, $G \cong H$, or 
$G$ and $H$ are isomorphic
 if $G^\mathcal{L} = H^\mathcal{L}$ and  $G^\mathcal{R} = H^\mathcal{R}$.
\end{definition}

\begin{definition}
    A ruleset $\Gamma$ is an impartial ruleset if every $G \in \Gamma$ satisfies $G^\mathcal{L} = G^\mathcal{R}$.

\end{definition}

In impartial games, the Left options are the same as the Right options, so we just say options for them.
When $G'$ is an option of $G$, we write $G \rightarrow G'$.

An impartial position $G$ belongs to exactly one of $\mathcal{N}$ and $\mathcal{P}$.
We can determine the outcome of $G$ recursively such that if $G$ has an option $G'$ which satisfies $o(G') = \mathcal{P}$, then $o(G) = \mathcal{N}$ and every $G'$ which is $G \rightarrow G'$ satisfies $o(G') = \mathcal{N}$, then $o(G) = \mathcal{P}$.

Let $\afimp$ be the set of all positions in impartial rulesets.
Let disjunctive sum of $G$ and $H$ be
$$
G+H \cong \{G^\mathcal{L} + H, G + H^\mathcal{L} \mid G^\mathcal{R} + H, G+ H^\mathcal{R}\}.
$$

That is, the disjunctive sum of positions $G$ and $H$ is a position such that each player chooses one component and makes a move to the component in their turn.

\begin{definition}
    For any positions $G, H \in \afimp$, $G=_{\afimp} H$ means for any $X \in \afimp$, $o(G+X)=o(H+X)$ holds.
\end{definition}

Let $\mathbb{N}_0$ be the set of all nonnegative integers.
    For any set of nonnegative integers $A$, we let
    $$\mex({A}) = \min (\mathbb{N}_0\setminus A).$$

\begin{definition}

For any position $G$ of impartial games, The Sprague-Grundy value of $G$ is 
    $$
\mathcal{G}(G) = \mex(\{ \mathcal{G}(G') \mid G \rightarrow G'\}).
    $$.
\end{definition}

Sprague-Grundy value is independently found by Sprague and Grundy (\cite{Spr35, Gru39}). By using Sprague-Grundy value, we can determine the outcome of positions as :
$$ o(G) = 
        \left \{ \begin{array}{ll}
            \mathcal{P} & \mathcal{G}(G)  = 0\\
            \mathcal{N} & \mathcal{G}(G)  \neq 0. 
        \end{array} \right .
        $$

Further, 
$$\mathcal{G}(G+H) = \mathcal{G}(G) \oplus \mathcal{G}(H)$$
holds. (Here, $\oplus$ is the operator of exclusive OR.)

Thus, if a position is disjunctive sum of positions, we need to know only Sprague-Grundy values of all components to determine the outcome of the whole position.

\begin{definition}
    Let 
    $$\nimber {0} \cong \{ \mid \}$$
    and 
    $$
    \nimber{n} \cong
    \{ \nimber{0}, \nimber{1}, \ldots, \nimber{(n-1)} \mid \nimber{0}, \nimber{1}, \ldots, \nimber{(n-1)} \}.
    $$
    We call $\nimber{0}, \nimber{1}, \ldots$ are {\em nimbers}.
    $\nimber{0}, \nimber{1}$ are abbreviated simply as $0$ and $\sglstar$ respectively.
    
\end{definition}

The outcome of the position depends on the Sprague-Grundy value of each component. Therefore, all positions whose Sprague-Grundy values are the same are equivalent when we use $=_{\afimp}$ as the equivalence relasion.

Since $\nimber{k}$ has Sprague-Grundy value $k$, for every $G\in \afimp$, $G =_{\afimp} \nimber{\mathcal{G}(G)}$ holds.

Therefore, the equivalence class $\afimp /{=_{\afimp}}$ is isomorphic to $\{ \nimber{n} \mid n \in \mathbb{N}_0\}$.
\begin{corollary}
    $$\nimber{k_1} + \nimber{k_2} =_{\afimp} \nimber{(k_1 \oplus k_2)}
    $$
    holds.
    
\end{corollary}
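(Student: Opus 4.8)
The plan is to reduce the statement entirely to facts already established in the excerpt, since the corollary is a direct consequence of the additivity of Sprague-Grundy values together with the fact that each impartial position is $=_{\afimp}$-equivalent to the nimber carrying its Grundy value. No new machinery is needed.

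First I would compute the Sprague-Grundy value of the sum $\nimber{k_1} + \nimber{k_2}$. By the additivity formula $\mathcal{G}(G+H) = \mathcal{G}(G) \oplus \mathcal{G}(H)$ and the fact that $\mathcal{G}(\nimber{k}) = k$, this value equals $k_1 \oplus k_2$. The identity $\mathcal{G}(\nimber{k}) = k$ itself follows by a routine induction on $k$ from the recursive mex definition, noting that the options of $\nimber{k}$ are exactly $\nimber{0}, \nimber{1}, \ldots, \nimber{(k-1)}$, whose Grundy values are $0, 1, \ldots, k-1$, so that $\mex(\{0, 1, \ldots, k-1\}) = k$.

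Next I would invoke the stated equivalence $G =_{\afimp} \nimber{\mathcal{G}(G)}$, specialized to $G = \nimber{k_1} + \nimber{k_2}$. Combined with the Grundy-value computation above, this immediately yields $\nimber{k_1} + \nimber{k_2} =_{\afimp} \nimber{(k_1 \oplus k_2)}$, completing the argument. The only point requiring a line of care is that $=_{\afimp}$ is genuinely an equivalence relation, so that the computed Grundy value may legitimately be substituted into the nimber index; this is transparent from its definition via outcome equality, which is reflexive, symmetric, and transitive.

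Consequently there is no substantive obstacle: the whole content of the corollary is carried by the additivity theorem, which is already assumed. If one instead wished to argue from first principles, bypassing the additivity formula, the real work would be a Tweedledum--Tweedledee style mirroring argument showing that the threefold sum $\nimber{k_1} + \nimber{k_2} + \nimber{(k_1 \oplus k_2)}$ is a previous-player win (a $\mathcal{P}$-position), together with the fact that each nimber is its own inverse; but given the results already in hand, that route is unnecessary.
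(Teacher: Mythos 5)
Your argument is correct and is exactly the route the paper intends: the corollary is stated as an immediate consequence of the additivity formula $\mathcal{G}(G+H)=\mathcal{G}(G)\oplus\mathcal{G}(H)$ together with the fact that $G =_{\afimp} \nimber{\mathcal{G}(G)}$, which is precisely what you invoke. The paper supplies no separate proof, so your write-up simply makes the implicit derivation explicit.
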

\begin{corollary}
\label{cor:normalimp} For
    $$
    G =_{\afimp} \nimber{k_1} + \cdots + \nimber{k_s}, 
    $$
     
    $$ o(G) = 
        \left \{ \begin{array}{ll}
            \mathcal{P} &(k_1 \oplus \cdots \oplus k_s  = 0) \\
            \mathcal{N} &(k_1 \oplus \cdots \oplus k_s  \neq 0)
        \end{array} \right .
        $$
        holds.
\end{corollary}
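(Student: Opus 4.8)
The plan is to combine three facts already established in the excerpt: the outcome rule $o(G)=\mathcal{P}$ iff $\mathcal{G}(G)=0$, the additivity $\mathcal{G}(G+H)=\mathcal{G}(G)\oplus\mathcal{G}(H)$, and the value $\mathcal{G}(\nimber{k})=k$. The overall strategy is to reduce the outcome of $G$ to a single Sprague-Grundy value, compute that value as the iterated exclusive OR of the $k_i$, and then read off the outcome from its vanishing or non-vanishing.

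First I would argue that $=_{\afimp}$ preserves outcome. By definition $G =_{\afimp} H$ means $o(G+X)=o(H+X)$ for every $X \in \afimp$; instantiating $X = \nimber{0}$ and noting that a disjunctive sum with the empty position $\nimber{0} \cong \{\mid\}$ offers no extra moves, so that $o(G+\nimber{0}) = o(G)$, yields $o(G)=o(\nimber{k_1}+\cdots+\nimber{k_s})$. Hence it suffices to determine the outcome of the nimber sum on the right-hand side.

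Next I would compute the Sprague-Grundy value of that sum by induction on $s$. The base case $s=1$ is precisely $\mathcal{G}(\nimber{k_1})=k_1$. For the inductive step, applying $\mathcal{G}(G+H)=\mathcal{G}(G)\oplus\mathcal{G}(H)$ with $G=\nimber{k_1}+\cdots+\nimber{k_{s-1}}$ and $H=\nimber{k_s}$, together with the inductive hypothesis, gives $\mathcal{G}(\nimber{k_1}+\cdots+\nimber{k_s}) = (k_1\oplus\cdots\oplus k_{s-1})\oplus k_s = k_1\oplus\cdots\oplus k_s$.

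Finally, applying the outcome rule to this value finishes the argument: the sum is a $\mathcal{P}$-position exactly when $k_1\oplus\cdots\oplus k_s=0$ and an $\mathcal{N}$-position otherwise, and since the outcome is invariant under $=_{\afimp}$ the same dichotomy holds for $G$. I expect no serious obstacle here, since every ingredient is quoted directly from the preceding results; the only point that deserves an explicit word is the invariance of outcome under $=_{\afimp}$, namely that adjoining the empty position does not change who wins, which is what licenses replacing $G$ by its nimber-sum representative.
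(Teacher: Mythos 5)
Your proposal is correct and matches the paper's intent: the paper states this as an immediate corollary of the outcome rule $o(G)=\mathcal{P}\Leftrightarrow\mathcal{G}(G)=0$, the additivity $\mathcal{G}(G+H)=\mathcal{G}(G)\oplus\mathcal{G}(H)$, and $\mathcal{G}(\nimber{k})=k$, giving no further argument. Your explicit induction on $s$ and the remark that adjoining $\nimber{0}\cong\{\mid\}$ preserves the outcome simply spell out the details the paper leaves implicit.
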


\subsection{Early result: impartial loopy games}
Impartial loopy game is impartial game which is not guaranteed it ends in finite moves. Sprague-Grundy theory is extended to loopy games (see \cite{Smi66} and \cite{FP73, FT75, FY86}.)

In impartial loopy games, the game may never end. Such situation is called Draw and the position is called a $\mathcal{D}$- position.

\begin{definition}
For any nonnegative integer $n$, $\mathcal{P}_n$ is the set of positions such as: 
    \begin{itemize}
        \item $G \in \mathcal{P}_0$ $\Longleftrightarrow$ $G$ is a terminal position.
        \item $G \in \mathcal{P}_{n}$ $\Longleftrightarrow$ For each $G'$,  which is an option of $G$, there is an option $G'' \in \mathcal{P}_k$ $(k<n)$.
    \end{itemize}
\end{definition}
\begin{definition}
\label{def_loopy_4.5}
For any position $G$ in impartial loopy ruleset, 
    \begin{itemize}
        \item $G$ is a $\mathcal{P}$-position if and only if there is a nonnegative integer $n$ such that $G \in \mathcal{P}_n$.
        \item $G$ is an $\mathcal{N}$-position if and only if $G$ has an option $G'$, which is a $\mathcal{P}$-position.
        \item $G$ is a $\mathcal{D}$-position if and only if $G$ is not a $\mathcal{P}$-position nor $\mathcal{N}$-position.
    \end{itemize}
\end{definition}

\begin{theorem}
\label{thm_loopy_4.6}
    For any position $G$ in impartial loopy ruleset, the following hold.
    \begin{itemize}
        \item [(a)] Every option of $G$ is an $\mathcal{N}$-position. $\Longleftrightarrow$~$G$ is a $\mathcal{P}$-position.
        \item [(b)] An option of $G$ is a $\mathcal{P}$-position. $\Longleftrightarrow$~$G$ is an $\mathcal{N}$-position.
        \item [(c)] No option of $G$ is a $\mathcal{P}$-position and an option of $G$ is a $\mathcal{D}$-position. $\Longleftrightarrow$~$G$ is a $\mathcal{D}$-position.
    \end{itemize}
\end{theorem}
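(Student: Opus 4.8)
The plan is to treat the three equivalences in turn, observing first that (b) is nothing more than the definition of an $\mathcal{N}$-position in Definition~\ref{def_loopy_4.5}: both directions of (b) read off immediately, since ``$G$ has an option that is a $\mathcal{P}$-position'' is exactly the defining condition for $G$ to be an $\mathcal{N}$-position. So all the real work is in (a), and then (c) will follow from (a), (b), and the purely definitional fact that a $\mathcal{D}$-position is one that is ``neither $\mathcal{P}$ nor $\mathcal{N}$''. Before starting I would record the monotonicity $\mathcal{P}_0 \subseteq \mathcal{P}_1 \subseteq \mathcal{P}_2 \subseteq \cdots$, which is immediate from the definition (a witnessing option in $\mathcal{P}_k$ with $k<n$ still has $k<n'$ for any $n'>n$); this lets me speak of the rank $r(G)=\min\{n : G\in\mathcal{P}_n\}$ of a $\mathcal{P}$-position.

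For the backward direction of (a), suppose $G$ is a $\mathcal{P}$-position, say $G\in\mathcal{P}_n$. If $n=0$ then $G$ is terminal and has no options, so the claim is vacuous. If $n\ge 1$, then by definition every option $G'$ of $G$ has an option $G''\in\mathcal{P}_k$ for some $k<n$; in particular $G'$ has a $\mathcal{P}$-option $G''$, so $G'$ is an $\mathcal{N}$-position. Hence every option of $G$ is an $\mathcal{N}$-position. For the forward direction, assume every option $G'$ of $G$ is an $\mathcal{N}$-position. By definition each such $G'$ has a $\mathcal{P}$-option, which I take to be $G''_{G'}$, with finite rank $k_{G'}=r(G''_{G'})$. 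Setting $n=1+\max_{G'}k_{G'}$, every option $G'$ of $G$ then has the option $G''_{G'}\in\mathcal{P}_{k_{G'}}$ with $k_{G'}<n$, which is exactly the condition for $G\in\mathcal{P}_n$; thus $G$ is a $\mathcal{P}$-position. The step ``$n=1+\max_{G'}k_{G'}$'' is where the argument has real content, and it is the one I expect to be the main obstacle: taking a maximum requires that $G$ have only finitely many options (or at least that the ranks $k_{G'}$ be bounded). I would therefore make explicit the standing hypothesis, standard in the Smith and Fraenkel--Perl treatment of loopy games, that every position has finitely many options, and point out that without it the supremum of the $k_{G'}$ could fail to be a natural number, in which case $G$ could fail to be a $\mathcal{P}$-position and would instead be a $\mathcal{D}$-position.

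With (a) and (b) in hand, (c) is a short case analysis using that, by definition, each position is a $\mathcal{P}$-, $\mathcal{N}$-, or $\mathcal{D}$-position and that $\mathcal{D}$ excludes both $\mathcal{P}$ and $\mathcal{N}$. For the backward direction, if $G$ is a $\mathcal{D}$-position then $G$ is not an $\mathcal{N}$-position, so by (b) no option of $G$ is a $\mathcal{P}$-position; and if $G$ additionally had no $\mathcal{D}$-option, then every option would be an $\mathcal{N}$-position (being neither $\mathcal{P}$ nor $\mathcal{D}$), whence (a) would force $G$ to be a $\mathcal{P}$-position, contradicting that $G$ is a $\mathcal{D}$-position; so $G$ must have a $\mathcal{D}$-option. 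For the forward direction, suppose no option of $G$ is a $\mathcal{P}$-position and some option is a $\mathcal{D}$-position. The first hypothesis gives, via (b), that $G$ is not an $\mathcal{N}$-position; and if $G$ were a $\mathcal{P}$-position, then by (a) every option would be an $\mathcal{N}$-position, contradicting the existence of a $\mathcal{D}$-option. Hence $G$ is neither $\mathcal{P}$ nor $\mathcal{N}$, i.e.\ $G$ is a $\mathcal{D}$-position. Throughout (c) I would only use that the labels $\mathcal{N}$ and $\mathcal{D}$ are mutually exclusive and that $\mathcal{P}$ and $\mathcal{D}$ are mutually exclusive, both of which hold by the very definition of a $\mathcal{D}$-position, so no separate disjointness lemma is required.
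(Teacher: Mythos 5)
Your proof is correct. The paper itself states Theorem~\ref{thm_loopy_4.6} without proof, as a known result of the Smith/Fraenkel--Perl theory, so there is no in-paper argument to compare against; your argument is the standard one, with (b) read off from Definition~\ref{def_loopy_4.5}, the backward direction of (a) from the defining condition of $\mathcal{P}_n$, the forward direction of (a) by taking $n = 1+\max_{G'} k_{G'}$, and (c) by trichotomy from (a) and (b). You are also right to flag that the forward direction of (a) genuinely needs each position to have finitely many options (equivalently, bounded ranks among the witnessing $\mathcal{P}$-options); this hypothesis is implicit in the paper's setting of loopy games on finite game graphs, and without it the equivalence in (a) can fail, so making it explicit is the correct call rather than a defect of your proof.
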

\begin{theorem}
    For any position $G$ in impartial loopy ruleset, the following hold:
    \begin{itemize}
        \item [(a)] $G$ is a $\mathcal{P}$-position. $\Longleftrightarrow$ The previous player has a winning strategy.
        \item[(b)] $G$ is an $\mathcal{N}$-position. $\Longleftrightarrow$ The next player has a winning strategy.
        \item[(c)] $G$ is a $\mathcal{D}$-position. $\Longleftrightarrow$ Each player can make it draw.
    \end{itemize}
\end{theorem}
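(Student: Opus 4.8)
The plan is to establish the three forward implications (from the combinatorial class of $G$ to the existence of the corresponding strategy) directly, and then to obtain all three converses at once from the fact that, by Definition \ref{def_loopy_4.5} together with Theorem \ref{thm_loopy_4.6}, every position lies in exactly one of the classes $\mathcal{P}$, $\mathcal{N}$, $\mathcal{D}$, while the three strategic outcomes (the previous player wins, the next player wins, neither player can force a win) are likewise mutually exclusive. Once we know that $\mathcal{P}\Rightarrow$ (previous player wins), $\mathcal{N}\Rightarrow$ (next player wins) and $\mathcal{D}\Rightarrow$ (each player can force at least a draw), this partition forces each implication to be an equivalence: if, say, the previous player wins at $G$, then $G$ can lie neither in $\mathcal{N}$ (else the next player would win) nor in $\mathcal{D}$ (else the game would be drawn), so $G\in\mathcal{P}$, and symmetrically for the other two.

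For (a) I would argue by induction on the smallest $n$ with $G\in\mathcal{P}_n$, showing that the previous player (the one not to move at $G$) wins. If $n=0$ then $G$ is terminal and the next player loses at once. If $n\ge 1$ and the next player moves to an option $G'$, then by the definition of $\mathcal{P}_n$ there is an option $G''$ of $G'$ with $G''\in\mathcal{P}_k$ for some $k<n$; the previous player moves there, and since the previous player at $G''$ is once more the player we are following, the induction hypothesis closes the case. The strict decrease of the rank guarantees that after finitely many rounds a terminal position is reached on the opponent's turn. Statement (b) is then immediate: an $\mathcal{N}$-position has a $\mathcal{P}$-option $G'$, so the next player moves to $G'$ and wins as the previous player of the $\mathcal{P}$-position $G'$ by part (a).

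The substance of the theorem is (c). Suppose $G$ is a $\mathcal{D}$-position. By Theorem \ref{thm_loopy_4.6}(c) no option of $G$ is a $\mathcal{P}$-position and at least one option is a $\mathcal{D}$-position; moreover a $\mathcal{D}$-position is never terminal, since terminal positions lie in $\mathcal{P}_0$ and hence in $\mathcal{P}$. I would then exhibit the following strategy for the player to move and verify that it never loses: whenever it is our turn at a $\mathcal{D}$-position, move to a $\mathcal{D}$-option, which exists. After such a move the opponent stands at a $\mathcal{D}$-position, whose options are only $\mathcal{N}$- or $\mathcal{D}$-positions. If the opponent moves to another $\mathcal{D}$-position we repeat; if the opponent moves to an $\mathcal{N}$-position we switch to the winning strategy of (b) and in fact win. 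Thus our turns are always taken at $\mathcal{D}$- or $\mathcal{N}$-positions, never at a terminal one, so we are never unable to move; either the opponent eventually enters an $\mathcal{N}$-position and we win, or play continues forever through $\mathcal{D}$-positions, which is a draw. Hence the player to move can force at least a draw, and the same argument applied one move later (the opponent's opening move from $G$ again lands in an $\mathcal{N}$- or $\mathcal{D}$-position) shows that the previous player can force at least a draw as well.

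I expect (c) to be the main obstacle, precisely because the non-losing guarantee must be read off from the possibility of an infinite play rather than from a terminating induction as in (a) and (b). The care required is to confirm that the drawing invariant ``on my turn I face a $\mathcal{D}$- or $\mathcal{N}$-position, and never a terminal one'' is genuinely preserved under every opponent reply, using Theorem \ref{thm_loopy_4.6}(c) to exclude $\mathcal{P}$-options and the non-terminality of $\mathcal{D}$-positions to rule out getting stuck. Once this invariant is secured the three forward implications are complete, and the converses follow from the partition argument described in the first paragraph.
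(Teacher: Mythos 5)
The paper states this theorem without proof, citing it as a known result from the loopy-games literature (Smith; Fraenkel et al.), so there is no in-paper argument to compare against. Your proposal is correct and is essentially the standard proof: the rank induction for (a), the one-move reduction for (b), the non-losing invariant via Theorem \ref{thm_loopy_4.6}(c) for (c), and the partition argument to upgrade the three forward implications to equivalences are all sound.
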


Let the set of all positions in impartial loopy ruleset $\limp$.
 Note that $\afimp \subset \limp$.

\begin{definition}

    We say $G =_{\limp} H$
    if for any $X \in \limp, o(G+X) = o(H+X)$.
    
\end{definition}

Note that even if $G =_{\afimp} H$, there might be a position $X \in \limp$ such that $o(G + X) \neq o(H + X)$ and $G \neq_{\limp} H.$
However, in fact, it was solved that such $X$ does not exist and $G =_{\afimp} H \Longrightarrow G =_{\limp} H$. Further, some positions do not equal to any $\nimber{n}$.  
\begin{definition}
    For any impartial loopy position $G$ and integer $n \geq 0$, let $\mathcal{G}_0(G)\in \mathbb{N}_0 \cup \{\infty\}$ be
$$
\mathcal{G}_0(G) = \left\{
\begin{array}{ll}
0 & \text{$G$ is a terminal position.} \\
\infty & \text{Otherwise.}
\end{array}
\right.
$$
Further, for any $n \geq 0$ let $m_n(G) = {\rm mex}(\{\mathcal{G}_n(G')\mid G \rightarrow G' \})$ and let
$$
\mathcal{G}_{n+1}(G) = \left\{
\begin{array}{ll}
m_n(G) & \text{If for every $G',$ which is an option of $G$ and satisfies $\mathcal{G}_n(G') > m_n(G)$,} \\ &\text{there is an option $G''$ of $G'$ such that $\mathcal{G}_n(G'') = m_n(G)$.} \\
\infty & \text{Otherwise.}
\end{array}
\right.
$$
Here, for any finite set $T \subset \mathbb{N}_0 \cup \{\infty\}$, ${\rm mex}(T) = \min ((\mathbb{N}_0\cup \{\infty\}) \setminus T)$.
\end{definition}

\begin{definition}
\label{def:loopygvalue}
    If $\mathcal{G}_n(G) = m$ for enough large $n$, we write $\mathcal{G}(G) = m$.
    Otherwise, let $\mathcal{G}(G) = \infty(\mathcal{A})$. Here, $\mathcal{A} = \{\mathcal{G}(G') \in \mathbb{N}_0\mid~G \rightarrow G'\}$.
\end{definition}
It is known that if we choose enough large $n,$ then $\mathcal{G}(G)$ is determined and will not change even if we choose larger $n$. 

For any $G \in \afimp$, from the Definition \ref{def:loopygvalue},  $\mathcal{G}(G)$ is the same as Sprague-Grundy value of $G$ defined in the last section. Therefore, we use the same symbol $\mathcal{G}(G)$.

\begin{theorem}$\mathcal{G}(G)$ satisfies the followings: 
    \begin{itemize}
        \item [(a)]$\mathcal{G}(G) = 0\Longleftrightarrow G$ is a $\mathcal{P}$-position.
        \item [(b)]$\mathcal{G}(G)$ is an integer $m>0$ $\Longrightarrow G$ is an $\mathcal{N}$-position.
        \item [(c)]$\mathcal{G}(G) = \infty(\mathcal{A})$ and $0 \in \mathcal{A} \Longrightarrow G$ is an $\mathcal{N}$-position.
        \item [(d)]$\mathcal{G}(G) = \infty(\mathcal{A})$ and $0 \not \in \mathcal{A}\Longleftrightarrow G$ is a $\mathcal{D}$-position.
    \end{itemize}
\end{theorem}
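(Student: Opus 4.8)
The plan is to prove part (a) first and then read off (b)--(d) in a few lines each, since every one of the latter reduces to locating an option of stable value $0$ and invoking the definition of an $\mathcal{N}$-position. Before that, the only thing I extract from the cited stabilization fact is the harmless consequence that $\mathcal{G}(G)=0$ implies $\mathcal{G}_n(G)=0$ for some (indeed all large) $n$, and that a finite stable value $m$ is realized as $\mathcal{G}_n(G)=m$ at such an $n$ through the defining recursion.

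The engine for (a) is two level-indexed lemmas comparing the approximants $\mathcal{G}_k$ with the hierarchy $\mathcal{P}_0,\mathcal{P}_1,\dots$; write $\mathcal{P}_{\le k}=\bigcup_{j\le k}\mathcal{P}_j$. First I would prove by induction on $k$ that $\mathcal{G}_k(G)=0 \Rightarrow G\in\mathcal{P}_{\le k}$. The base case is immediate, since $\mathcal{G}_0(G)=0$ exactly for terminal $G$ and $\mathcal{P}_0$ is the set of terminal positions. For the step, $\mathcal{G}_{k+1}(G)=0$ forces $m_k(G)=0$ together with the clause that every option $G'$ has an option $G''$ with $\mathcal{G}_k(G'')=0$; the induction hypothesis places each such $G''$ in $\mathcal{P}_{\le k}$, which is exactly the condition for $G\in\mathcal{P}_{k+1}$. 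Notably this direction uses no monotonicity of the sequence $\mathcal{G}_k$. Second, I would prove by induction on $n$ the stronger statement $G\in\mathcal{P}_n \Rightarrow \mathcal{G}_k(G)=0$ for every $k\ge n$. In the step, each option $G'$ of a $\mathcal{P}_n$-position has an option $G''\in\mathcal{P}_j$ with $j<n$, so the induction hypothesis yields $\mathcal{G}_{k-1}(G'')=0$ (as $k-1\ge n-1\ge j$), supplying the witness clause that every option of $G$ has an option of $\mathcal{G}_{k-1}$-value $0$; for the complementary clause I must rule out any option $G'$ with $\mathcal{G}_{k-1}(G')=0$, and here the first lemma re-enters: such a $G'$ would lie in $\mathcal{P}_{\le k-1}$, making $G$ an $\mathcal{N}$-position by definition, which is impossible for the $\mathcal{P}$-position $G$.

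Combining the two lemmas gives (a): if $\mathcal{G}(G)=0$ then $\mathcal{G}_n(G)=0$ for some $n$, so $G\in\mathcal{P}_{\le n}$ is a $\mathcal{P}$-position; conversely, if $G$ is a $\mathcal{P}$-position then $G\in\mathcal{P}_n$ for some $n$, whence $\mathcal{G}_k(G)=0$ for all $k\ge n$ and the stable value is $0$. For (b), a finite value $m>0$ forces $0\in\{\mathcal{G}_{n-1}(G')\mid G\to G'\}$ at a stabilized level, so some option has $\mathcal{G}_{n-1}(G')=0$ and is a $\mathcal{P}$-position by the first lemma, making $G$ an $\mathcal{N}$-position. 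Part (c) is the same observation read off $\mathcal{A}$: here $0\in\mathcal{A}$ means some option has stable value $0$, hence is $\mathcal{P}$ by (a), so $G$ is $\mathcal{N}$. Finally (d) follows by elimination: if $\mathcal{G}(G)=\infty(\mathcal{A})$ with $0\notin\mathcal{A}$ then no option is $\mathcal{P}$, so $G$ is not $\mathcal{N}$, and $G$ is not $\mathcal{P}$ since its value is not $0$; the converse runs back through (a), (b), (c), using that the outcomes $\mathcal{P},\mathcal{N},\mathcal{D}$ are mutually exclusive.

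The main obstacle is that the approximants $\mathcal{G}_k$ need not be monotone in $k$ — a position can satisfy $\mathcal{G}_k(G)=0$ at one level and carry a nonzero value later — so one cannot naively transport a value of $0$ from the level where $\mathcal{P}_n$-membership first certifies it up to the level where the sequence stabilizes. My formulation evades this in two complementary ways: the first lemma only ever needs the existence of a single level at which the value is $0$, which the stable value $0$ automatically supplies, while the second lemma builds persistence directly into its conclusion by asserting $\mathcal{G}_k(G)=0$ for all $k\ge n$ and proving it by induction rather than deducing it from monotonicity. The only external inputs are the defining recursion for $\mathcal{G}_k$, the definition of an $\mathcal{N}$-position, the mutual exclusivity of the three outcomes, and the cited eventual stabilization of $\mathcal{G}_k$.
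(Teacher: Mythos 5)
The paper does not prove this statement: it appears in the background section on impartial loopy games and is quoted from the cited literature (Smith; Fraenkel et al.) without argument, so there is no in-paper proof to compare against. Judged on its own, your proof is correct and self-contained. The two level-indexed lemmas are the right engine: $\mathcal{G}_k(G)=0\Rightarrow G\in\mathcal{P}_{\le k}$ needs only one good level (which the stable value supplies), and $G\in\mathcal{P}_n\Rightarrow\mathcal{G}_k(G)=0$ for all $k\ge n$ builds persistence into the statement instead of appealing to a monotonicity the approximants do not have; your diagnosis of that obstacle is exactly right, and parts (b)--(d) do reduce to reading off an option of value $0$ plus elimination. Two small points are worth making explicit in a written version. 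First, in the step of your first lemma, the recursion's witness clause only quantifies over options $G'$ with $\mathcal{G}_k(G')>m_k(G)$; when $m_k(G)=0$ this covers \emph{all} options precisely because $\mathrm{mex}=0$ forces $0\notin\{\mathcal{G}_k(G')\}$, so every option's value is strictly positive (or $\infty$) --- you use this but do not say it. Second, your second lemma leans on $\mathcal{P}\cap\mathcal{N}=\emptyset$, which you list as an external input; this does follow from the paper's stated winning-strategy characterization of $\mathcal{P}$- and $\mathcal{N}$-positions, though a purist would prove it directly by induction on the least $n$ with $G\in\mathcal{P}_n$ to keep the development non-circular. Neither point is a gap.
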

When $\mathcal{G}(G) = \infty(A)$, we write $G =_{\limp} \infty(A)$.

The following theorems holds.
\begin{theorem}
\label{thm_loopy_4.14}
    If $\mathcal{G}(G) = m$, then $G =_{\limp} *m$.
\end{theorem}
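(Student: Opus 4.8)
The plan is to prove $G =_{\limp} \nimber{m}$ in two stages: first reduce it to the single equivalence $G + \nimber{m} =_{\limp} 0$, and then establish that equivalence by a mirroring strategy between the two summands. For the reduction, note that $=_{\limp}$ is a congruence for disjunctive sum (if $A =_{\limp} B$ then $A + C =_{\limp} B + C$, by instantiating the defining quantifier at $C + X$), and that $\nimber{m} + \nimber{m} =_{\afimp} 0$ by nimber arithmetic, since $m \oplus m = 0$, hence $\nimber{m} + \nimber{m} =_{\limp} 0$ by the implication $=_{\afimp} \Rightarrow =_{\limp}$ recorded above. Granting $G + \nimber{m} =_{\limp} 0$, for every $X \in \limp$ I can chain
$$o(G + X) = o\big(G + \nimber{m} + \nimber{m} + X\big) = o\big(\nimber{m} + X\big),$$
where the first equality uses $\nimber{m} + \nimber{m} =_{\limp} 0$ and the second uses $G + \nimber{m} =_{\limp} 0$. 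This is exactly $G =_{\limp} \nimber{m}$.

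Next I would translate the hypothesis $\mathcal{G}(G) = m$ into structural facts about options, read off from the stabilized approximations $\mathcal{G}_n$. Writing $\mathcal{G}(G')$ for the stable value of an option, the equation $m = \mex(\{\mathcal{G}(G') \mid G \rightarrow G'\})$ together with the reversibility clause in the definition of $\mathcal{G}_{n+1}$ gives: (i) for each $k < m$ there is an option $G'$ with $\mathcal{G}(G') = k$; (ii) no option has $\mathcal{G}(G') = m$; and (iii) every option $G'$ with $\mathcal{G}(G') > m$ or $\mathcal{G}(G') = \infty(\cdot)$ itself has an option $G''$ with $\mathcal{G}(G'') = m$. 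I would in fact prove the slightly more general statement that $\mathcal{G}(L) = k$ implies $o(L + \nimber{k} + X) = o(X)$ for all $X$, since the mirroring naturally moves between different values $k$.

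The core is then a shadowing strategy in $L + \nimber{k} + X$ (initially $L = G$, $k = m$) for whichever player owns a strategy in the isolated game $X$. That player follows their $X$-strategy verbatim, and whenever the opponent disturbs the pair $L + \nimber{k}$ they restore the invariant that $\mathcal{G}(L)$ equals the current nimber value, without touching $X$: a move $\nimber{k} \rightarrow \nimber{j}$ with $j < k$ is answered by $L \rightarrow L'$ with $\mathcal{G}(L') = j$ using (i); a move $L \rightarrow L'$ with $\mathcal{G}(L') = j < k$ is answered by $\nimber{k} \rightarrow \nimber{j}$; a move $L \rightarrow L'$ with $\mathcal{G}(L') > k$ (or $\infty$) is answered inside $L$ by $L' \rightarrow L''$ with $\mathcal{G}(L'') = k$ using (iii); and by (ii) the opponent can never equalise the two summands in a single move. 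Thus play projects onto play in $X$, and the designated player realises in $X$ the outcome promised by $o(X)$.

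The hard part is the loopy termination analysis, precisely where short game theory would already be finished. Because $L$ is loopy, the reversing responses of type (iii) could a priori be repeated forever, threatening to convert a previous-player win into a draw. To rule this out I would work with rank data rather than the bare outcome: when $o(X) = \mathcal{P}$ the position $X$ lies in some $\mathcal{P}_s$, and I would show, by an induction keyed to the level $n$ at which $\mathcal{G}_n(L)$ has stabilised to the finite value $k$, that each disturbance of the difference is met with a strictly rank-decreasing reply, so the combined game lies in $\mathcal{P}_r$ for a finite $r$ and the win is genuinely reached in finitely many moves. The $\mathcal{N}$ case reduces to the $\mathcal{P}$ case via the winning option, and the $\mathcal{D}$ case follows because the same invariant lets the designated player always retain a reply while never moving to a $\mathcal{P}$-position, matching Theorem \ref{thm_loopy_4.6}(c). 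Certifying that the finiteness of $\mathcal{G}(L)$, as opposed to $\infty(\cdot)$, forbids an infinite exchange within the difference is the main obstacle, and is exactly where the hypothesis $\mathcal{G}(G) = m < \infty$ is used in an essential way.
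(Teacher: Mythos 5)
This theorem is one the paper imports from the literature on loopy Sprague--Grundy theory (Smith; Fraenkel--Perl; Fraenkel--Tassa; Fraenkel--Yesha) and states without proof, so there is no in-paper argument to match yours against; what you have reconstructed is, in outline, the classical proof. Your reduction to $G + \nimber{m} =_{\limp} 0$ is fine, your structural facts (i)--(iii) are exactly what the stabilization clause in the definition of $\mathcal{G}_{n+1}$ delivers, and your mirroring strategy with reversions is the standard one. The one place where your write-up is not yet a proof is the step you yourself flag: termination. Two remarks there. First, your claim that ``each disturbance of the difference is met with a strictly rank-decreasing reply'' is not literally true for the exchange in which the opponent plays $\nimber{k} \rightarrow \nimber{j}$ and you answer $L \rightarrow L'$ with $\mathcal{G}(L') = j$: the stabilization index of $L'$ (relative to $j$) can be larger than that of $L$ (relative to $k$), so no single scalar rank decreases. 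What does work is a lexicographic measure: every exchange inside the difference either strictly decreases the nimber component, or leaves it fixed and strictly decreases the stabilization index (the reversion in case (iii) lands on a witness already certified at level $n$ when $L$ is certified at level $n+1$), and every exchange inside $X$ leaves the difference untouched while decreasing the $\mathcal{P}_s$-rank of $X$; the triple ($X$-rank, nimber, index) is then well-founded. Second, the $\mathcal{D}$ case needs both directions --- that the designated player can avoid ever handing over a $\mathcal{P}$-position \emph{and} that the opponent cannot convert the sum into a win --- and the latter again rests on the same measure. None of this is a wrong turn; it is the counter-function argument of the cited papers, and your sketch identifies the correct hypothesis ($\mathcal{G}_n(L)$ finite and stabilized at a finite $n$) as the engine. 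But as submitted the decisive lemma is announced rather than proved, so you should either carry out the lexicographic termination argument or cite it to the original sources as the paper does.
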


\begin{theorem}
$$
\nimber{n_1} + \nimber{n_2} =_{\limp} \nimber{(n_1 \oplus n_2)}
$$    
$$
\nimber{n} + \infty(A) =_{\limp} \infty(n \oplus A)
$$
$$
\infty(A) + \infty(B) =_{\limp} \infty(\emptyset).
$$
Here, for any integer $n$ and set $A$, $n \oplus A = \{n \oplus a \mid a \in A\}$.
\end{theorem}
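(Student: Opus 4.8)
The plan is to reduce all three identities to computations of the loopy Grundy value $\mathcal{G}$ of a disjunctive sum, and then to invoke the fact (Theorem \ref{thm_loopy_4.14} for finite values, together with the convention recording $\mathcal{G}(G) = \infty(A)$ as $G =_{\limp} \infty(A)$) that a position is $=_{\limp}$-equivalent to the symbol naming its Grundy value. First I would note that $=_{\limp}$ is a congruence for $+$: if $G =_{\limp} G'$ then, taking $Y = H + X$ in the defining condition (legitimate since $\limp$ is closed under disjunctive sum), we get $o(G + H + X) = o(G' + H + X)$ for every $X$, so $G + H =_{\limp} G' + H$, and symmetrically in the second argument. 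Hence it suffices to show that the left-hand sums carry the Grundy values named on the right. The first identity is then immediate: $\nimber{n_1}, \nimber{n_2} \in \afimp$, so their sum is a terminating impartial game whose loopy value coincides with its ordinary Sprague--Grundy value $n_1 \oplus n_2$, and Theorem \ref{thm_loopy_4.14} yields $\nimber{n_1} + \nimber{n_2} =_{\limp} \nimber{(n_1\oplus n_2)}$.

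For the second identity I would fix a representative $H$ with $\mathcal{G}(H) = \infty(A)$, so that $A = \{\mathcal{G}(H') \in \mathbb{N}_0 \mid H \to H'\}$, and analyse the options of $\nimber{n} + H$, namely the $\nimber{m} + H$ with $m < n$ and the $\nimber{n} + H'$ with $H \to H'$. The goal is $\mathcal{G}(\nimber{n}+H) = \infty(n \oplus A)$, which splits into showing (i) that the finite Grundy values reachable from $\nimber{n}+H$ are exactly $n \oplus A$, and (ii) that the value is genuinely infinite. For (i), an option $\nimber{n}+H'$ has finite value iff $\mathcal{G}(H') = a \in A$, in which case the identity applied to that simpler option gives value $n \oplus a$; an option $\nimber{m}+H$ with $m<n$ has value $\infty(m \oplus A)$, which is infinite and so contributes nothing to the finite set. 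I would organise this as a simultaneous induction with the third identity on the approximation index $k$, proving that $\mathcal{G}_k$ of each sum agrees with the approximation computed from the claimed value and then passing to the limit guaranteed by Definition \ref{def:loopygvalue}.

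For the third identity I would fix representatives $G, H$ with $\mathcal{G}(G) = \infty(A)$ and $\mathcal{G}(H) = \infty(B)$ and show that every option of $G + H$ has infinite Grundy value: an option $G' + H$ is $\infty(\cdot)$ by the second identity when $G'$ is finite and by the third identity when $G'$ is infinite, and symmetrically for $G + H'$. Thus the set of finite option-values is empty, which, once infiniteness is confirmed, yields $\mathcal{G}(G+H) = \infty(\emptyset)$ as required.

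The main obstacle is part (ii): verifying that these sums genuinely have infinite value rather than collapsing to a finite nimber. Because the positions are loopy, the recursion is not well founded on game length, so ordinary structural induction is unavailable; instead I must track the approximation sequences $\mathcal{G}_k$ directly and show that the stabilisation condition in the definition of $\mathcal{G}_{k+1}$ fails at every level for the sum precisely because it fails for the summand carrying the infinite value. Making this correspondence between the approximation sequence of $H$ (respectively of $G$ and $H$) and that of the sum precise, uniformly in $k$, is the delicate bookkeeping on which the whole argument rests.
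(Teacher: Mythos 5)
This theorem is one the paper imports from the earlier literature on loopy impartial games (Smith; Fraenkel--Perl; Fraenkel--Tassa; Fraenkel--Yesha): it is stated in the ``Early result'' section with no proof, so there is no in-paper argument to compare yours against. Judged on its own terms, your proposal has the right architecture --- observe that $=_{\limp}$ is a congruence for disjunctive sum, reduce each identity to a computation of the loopy Grundy value of the sum, and invoke Theorem~\ref{thm_loopy_4.14} together with the convention $\mathcal{G}(G)=\infty(A) \Leftrightarrow G =_{\limp} \infty(A)$ --- and your treatment of the first identity is complete. (One small point you elide: to read off $\mathcal{G}(\nimber{n}+H') = n\oplus a$ from $\nimber{n}+H' =_{\limp} \nimber{(n\oplus a)}$ you need the converse of Theorem~\ref{thm_loopy_4.14}, i.e.\ that distinct symbols name distinct $=_{\limp}$-classes; this is true but should be said.)

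The genuine gap is the one you yourself flag and then do not close: the claim that the sums in the second and third identities have \emph{infinite} Grundy value, and the entangled claim in your step (i) that an option $\nimber{n}+H'$ with $\mathcal{G}(H')$ infinite contributes no finite value. Both are instances of the very statement being proved at positions that are not structurally smaller --- $H'$ is an option of $H$ in a loopy game, so there is no well-founded induction on followers, and ``induction on the number of special moons'' style arguments from later in the paper are unavailable here. Your proposed fix, a simultaneous induction on the approximation index $k$ showing that the stabilisation condition for $\mathcal{G}_{k+1}$ fails for the sum whenever it fails for a summand, is the standard route, but it is precisely the hard part of the theorem and you give no invariant relating $\mathcal{G}_k(\nimber{n}+H)$ to $\mathcal{G}_k(H)$ (note that the index may need to be offset rather than matched, since a move in the $\nimber{n}$ component changes the sum without advancing $H$). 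As written, the proposal is a correct plan for the easy identity plus an accurate description of where the difficulty lies in the other two, rather than a proof of them; to complete it you must either carry out that $\mathcal{G}_k$ bookkeeping explicitly or replace it with an outcome-based argument (e.g.\ deriving a contradiction from the assumption that $\nimber{n}+H+\nimber{m}$ is a $\mathcal{P}$-position by exhibiting a drawing or winning strategy for the mover built from the fact that $\mathcal{G}(H)$ never stabilises).
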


In summary, a position in impartial loopy game equals to $\nimber{n}$ or $\infty(A)$ by equivalence relation $=_{\limp}$ and the equivalence class $\limp /{=_{\limp}}$ is isomorphic to $\{ \nimber{n} \mid n \in \mathbb{N}_0 \} \cup \{\infty(A) \mid A \subsetneq \mathbb{N}_0\}$.

\subsection{Early result: impartial entailing games}

In some rulesets, there are carry-on moves. That is, a player can move twice or more under some special circumstances. 
The theory produced to handle such rulesets is the theory of {\em entailing games}. In this section, we overview the theory of impartial loop-free entailing games introduced in \cite{LNS21}. See also \cite{LNS23} for the ruleset analyzed by using the theory of entailing games and \cite{LNS24} for partisan entailing games.

To describe entailing games, positive infinity $\infty$ and negative infinity  $\minfty$ are introduced \footnote{Note that following the early results, we use the symbol $\infty$ for both entailing games and loopy games. $\infty(A)$, with a set $A$, is a position in loopy games and only $\infty$ is  a position in entailing games.}. These positions are immediately win for Left and Right. That is, once a component of disjunctive sun becomes $\infty$ (resp. $\minfty$), then Left (resp. Right) wins whatever the other components are.
Consider a situation where Left moves from an impartial  game position $G$ to the position $G'$ and also has the right to make the next move. To describe this situation, we say that Left has $\{ \infty \mid G'\}$ as an option. If left chooses this move, Right will always lose except by choosing $G'$, so Right has no choice but to immediately move to $G'$. As a result, Left makes her turn at $G'$, and we can describe a sequence of moves from $G$ to $G'$ and then back to one's own turn.

Of course, it is important to note that the introduction of such symbols can represent a variety of situations, not just a simple carry-on move.
For example, if there is a left option
$\{ \infty \mid G', G''\}$, then Left can force Right to choose $G'$ or $G''$, but its not exactly the same as carry-on move since this includes Right's choice.
Let the set of all  entailing games $\nplay^\infty$.
The positions of entailing games are defined recursively as well as standard partisan positions. This recursively definition starts from $\infty, \minfty$ while standard theory starts from $\{ \mid \}$. Since $\{ \minfty \mid \infty\}$ plays the same role as $\{ \mid \},$ we can consider standard partisan games in this framework.

\begin{axiom}
\label{axm:base}
The positive infinity $\infty$ and the negative infinity $\overline{\infty}$ satisfy the followings: 
\begin{enumerate}
    \item $\infty \in \mathcal{L}.$
    \item $\overline{\infty} \in \mathcal{R}.$
    \item For any $X \in \nplay^\infty \setminus \{\overline{\infty}\}$, $\infty + X = \infty.$ 
    \item For any $X \in \nplay^\infty \setminus \{\infty\}$, $\overline{\infty} + X = \overline{\infty}.$ 
    \item $\infty + \overline{\infty}$ is not defined.
\end{enumerate}

\end{axiom}

Next, we introduce impartial entailing games (or affine impartial games).

\begin{definition}[Check]
    Consider $G \in \nplay^\infty$. If $\infty \in G^\mathcal{L}$( resp. $\overline{\infty} \in G^\mathcal{R}$), then we call this a {\em Left-check (resp. Right-check)}. If $G$ is a Left-check or a Right-check, then we say that $G$ is a {\em check}. $G^{\overrightarrow{L}}(G^{\overleftarrow{R}})$ is a Left option (resp. Right option) of $G$ which is a Left-check (resp. Right-check).
\end{definition}

\begin{definition}
    Assume that $G \in \nplay^\infty$. If $G \neq \infty$ (resp. $G \neq \overline{\infty}$) and $G$ is not Left-check (resp. Right-check), then $G$ is Left-quiet (resp. Right-quiet). If $G$ is both Left-quiet and Right-quiet, then $G$ is quiet.
\end{definition}

\begin{definition}
    For any position $G \in \nplay^\infty$, the conjugate $\conj{G}$ is 

$$
\conj{G} = \left\{
\begin{array}{ll}
\overline{\infty} & (G = \infty) \\
\infty & (G = \minfty) \\
\{ \conj{G^\mathcal{R}} \mid \conj{G^ \mathcal{L}} \} & (\text{Otherwise.})
\end{array}
\right.
$$
Here, $\conj{G^ \mathcal{L}}$ is the set of all conjugates of Left options of $G$. Similarly, $\conj{G^ \mathcal{R}}$ is the set of all conjugates of Right options of $G$.
    
\end{definition}

\begin{definition}[Symmetric position]
    Assume that $G \in \nplay^\infty$. $G$ is symmetric if $G \not \in \{ \infty , \overline{\infty}\}$ and $G^\mathcal{R} = \conj{G^\mathcal{L}}$.
\end{definition}
\begin{definition}[Impartial entailing game]
    If $G \in \nplay^\infty$ is symmetric and every quiet follower of $G$ is also symmetric, then $G$ is a position of impartial entailing games. 
    Let the set of all positions of impartial entailing games be $\afimp^\infty \subset \nplay^\infty$.
    
\end{definition} 
\begin{definition}
    Let
    $$
    \moon \cong \{ \infty \mid \minfty\}.
    $$
    
\end{definition}

In impartial entailing games, there is a position such that the outcome is $\mathcal{N}$ whatever positions are added to it. The simplest example is $\moon\cong \{ \infty \mid \minfty\}$, but many positions like $\{\nimber{2}, \{ \infty \mid \nimber{2}\} \mid \nimber{2}, \{\nimber{2} \mid \minfty\}\}$ satisfies the property. 

\begin{definition}
For any positions $G, H \in \afimp^\infty$, if for any $X \in \afimp^\infty $, $o(G+X)= o(H+X)$ holds, then $G =_{\afimp^\infty} H$.
\end{definition}
Similar to when we define $=_{\limp}$, we should consider that there might be positions such that $G=_{\afimp} H$ and $G \neq_{\afimp^\infty} H$. However, it is proved that, there are no such positions and , if $G=_{\afimp} H$ then $G =_{\afimp^\infty} H$.
In addition, some $G  \in \afimp^\infty \setminus \afimp$ satisfies $G =_{\afimp^\infty} \nimber{n}$. Thus we need to extend the Sprague-Grundy theory as well as impartial loopy games.

\begin{definition}
Let $\nim$ be the set of positions $G$ such that $G \in \afimp^\infty$ and $G =_{\afimp^\infty} \nimber{n}$ for an integer $n$.
\end{definition}
\begin{definition}
\label{def10}
    Assume that $G \in \afimp^\infty$. The set of {\em $G$-immediate nimbers} is $S_G = G^\mathcal{L} \cap \nim$.
\end{definition}

Note that by symmetry $S_G = G^{\mathcal{R}} \cap \nim$, and $S_{\moon} = \emptyset$.

\begin{definition}
    Assume that $G \in \afimp^\infty$. The set of {\em $G$-protected nimbers} $P_G$ is defined as follows: 
    \begin{enumerate}
    \item If $\infty \in G^\mathcal{L}$, then $P_G = \nim$.
    \item Otherwise, $P_G = \{*n \mid G^{\overrightarrow{L}} + *n \in \mathcal{L}, G^{\overrightarrow{L}} \in G^\mathcal{L}\}$. 
    \end{enumerate}
\end{definition}

\begin{theorem}
    \label{thm19}
    $\moon$ is an absorbing element in $\afimp^\infty$, that is, for any $Y \in \afimp^\infty$, $\moon + Y =_{\afimp^\infty} \moon$ holds.
\end{theorem}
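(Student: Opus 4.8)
The plan is to unfold the definition of $=_{\afimp^\infty}$ and reduce the statement to a single outcome computation. By definition, proving $\moon + Y =_{\afimp^\infty} \moon$ amounts to showing that $o(\moon + Y + X) = o(\moon + X)$ for every $X \in \afimp^\infty$. I would prove that \emph{both} outcomes equal $\mathcal{N}$ by first establishing the self-contained lemma: for every position $Z \in \nplay^\infty$ with $Z \neq \infty$ and $Z \neq \minfty$, one has $o(\moon + Z) = \mathcal{N}$. The theorem then follows by applying this lemma to $Z = Y + X$ and to $Z = X$.

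For the lemma, the winning strategy for the player to move is read directly off the shape $\moon \cong \{\infty \mid \minfty\}$. Since $Z \neq \minfty$, the Left option $\infty + Z$ of $\moon + Z$ equals $\infty$ by Axiom~\ref{axm:base}(3), and $\infty \in \mathcal{L}$ by Axiom~\ref{axm:base}(1); hence if Left is to move, she wins immediately by collapsing the $\moon$-component to $\infty$. Dually, since $Z \neq \infty$, Right to move wins by playing to $\minfty + Z = \minfty \in \mathcal{R}$, using Axiom~\ref{axm:base}(4) and (2). Whoever moves first wins, so $\moon + Z$ is an $\mathcal{N}$-position.

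To conclude, I would observe that any $X \in \afimp^\infty$ is symmetric and therefore distinct from both $\infty$ and $\minfty$; consequently $Y + X$ and $X$ are each neither $\infty$ nor $\minfty$, so the lemma applies to $Z = Y + X$ and to $Z = X$. This yields $o(\moon + Y + X) = \mathcal{N} = o(\moon + X)$, and since $X$ was arbitrary, $\moon + Y =_{\afimp^\infty} \moon$.

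The step needing the most care is not the bookkeeping but the justification that the declared infinity is genuinely a win inside the sum: I must confirm that the absorbing behavior of $\infty$ (resp. $\minfty$) really does collapse $\infty + Z$ to $\infty$ in the entailing setting and that no entailing subtlety lets the opponent escape after the mover reaches an infinity. This is precisely what Axiom~\ref{axm:base} is engineered to guarantee, and the only side condition it requires, $Z \neq \minfty$ (resp. $Z \neq \infty$), is furnished by the symmetry of the positions in $\afimp^\infty$. It is also worth recording that $\moon + Z$ is itself symmetric (the disjunctive sum of symmetric positions is symmetric, using $\conj{Z} = Z$ for symmetric $Z$), so its outcome a priori lies in $\{\mathcal{N}, \mathcal{P}\}$, which is consistent with the value $\mathcal{N}$ obtained above.
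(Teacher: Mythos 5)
Your proof is correct. The paper itself states Theorem~\ref{thm19} as an imported result from the impartial entailing games literature (\cite{LNS21}) and gives no proof of its own, so there is nothing to diverge from; your argument --- reduce to showing $o(\moon+Z)=\mathcal{N}$ for every $Z\neq\infty,\minfty$ by having the mover collapse the $\moon$ component to the winning infinity via Axiom~\ref{axm:base}, then note that symmetry of positions in $\afimp^\infty$ supplies the side conditions $Z\neq\infty,\minfty$ for $Z=X$ and $Z=Y+X$ --- is exactly the standard one, and all the steps check out.
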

\begin{theorem}
\label{thm:loonyismoon}
    Assume that $G \in \afimp^\infty$. If for any $\nimber{k}$, $o(\nimber{k} + G ) = \mathcal{N}$, then $G =_{\afimp^\infty} \moon$.
\end{theorem}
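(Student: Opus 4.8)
The plan is to first reduce the claimed equivalence to a statement purely about outcomes. Every position of $\afimp^\infty$ is symmetric, so every sum that occurs below is symmetric and its outcome lies in $\{\mathcal{N},\mathcal{P}\}$. By Axiom~\ref{axm:base} the mover of $\moon + X$ wins immediately, moving the $\moon$ component to $\infty$ (Left) or $\minfty$ (Right) and using $\infty + Y = \infty$; hence $o(\moon + X) = \mathcal{N}$ for every $X \in \afimp^\infty$, consistently with $\moon$ being absorbing (Theorem~\ref{thm19}). Consequently $G =_{\afimp^\infty} \moon$ is \emph{equivalent} to the single assertion
\[
o(G + X) = \mathcal{N}\qquad\text{for all } X \in \afimp^\infty,
\]
and this is what I would establish.

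Next I would distil structural information from the hypothesis. For each nimber $\nimber{k}$ the mover wins $G + \nimber{k}$, and her winning reply cannot be played in the nimber component: a move $\nimber{k} \to \nimber{j}$ leaves $G + \nimber{j}$, which is $\mathcal{N}$ by hypothesis, whereas a winning reply must reach a $\mathcal{P}$-position. So the winning reply is a move inside $G$. Either $\infty \in G^\mathcal{L}$ — then $G$ is a check, by symmetry also a Right-check, and the mover of $G + X$ wins outright for \emph{every} $X$, settling the theorem — or, for every $k$, there is an option $G'$ of $G$ with $o(G' + \nimber{k}) = \mathcal{P}$, and such a $G'$ must then be quiet, since a Left- or Right-check $G'$ would make $G' + \nimber{k}$ an $\mathcal{N}$-position. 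The aim of this step is to extract from these facts that $G$ admits \emph{dual access} to a single nimber $\nimber{k_0}$: a quiet option $Q$ with $o(Q + \nimber{k_0}) = \mathcal{P}$, together with a Left-check option of the shape $\{\infty \mid \nimber{k_0}\}$ whose every forced reply returns to $\nimber{k_0}$, exactly as in the sample position $\{\nimber{2}, \{\infty \mid \nimber{2}\} \mid \nimber{2}, \{\nimber{2}\mid\minfty\}\}$.

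Granting dual access to some $\nimber{k_0}$, I would finish quickly. Let $X \in \afimp^\infty$ be arbitrary; the position $\nimber{k_0} + X$ is symmetric, so $o(\nimber{k_0} + X) \in \{\mathcal{N},\mathcal{P}\}$. If it is $\mathcal{P}$, the mover (say Left) plays the quiet move, reaching $\nimber{k_0} + X$ with Right to move, and Right, the mover of a $\mathcal{P}$-position, loses. If it is $\mathcal{N}$, Left plays the check move to $\{\infty\mid\nimber{k_0}\} + X$; Right cannot answer inside $X$, for then Left cashes the check to $\infty$ and wins at once, so Right is forced to return to $\nimber{k_0} + X$ with Left to move, and Left, the mover of an $\mathcal{N}$-position, wins. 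In both cases $o(G+X) = \mathcal{N}$. Note that one value $k_0$ suffices for \emph{all} $X$, since the parity is selected by the outcome of $\nimber{k_0}+X$, not by $X$ itself.

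The hard part will be the middle step — converting the outcome-theoretic hypothesis into genuine dual access. The delicate points are: (i) producing one value $k_0$ that is simultaneously reachable quietly and by a tempo-reversing check, rather than a separate option for each $k$; and (ii) controlling \emph{all} forced replies to that check, so that the opponent cannot slip to a more favourable nimber while answering it, where the symmetry of $G$ and the closure of $\afimp^\infty$ under quiet followers should be the tools. A related caution is that $X$ need not be equivalent to any nimber — it may itself be $\moon$-like — so one cannot first replace $X$ by $\nimber{\mathcal{G}(X)}$ and quote the hypothesis; the strategy must be built directly at the level of outcomes, as in the parity argument above.
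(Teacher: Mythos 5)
Your framing is correct as far as it goes: since the mover of $\moon + X$ wins at once via $\infty$ or $\minfty$, we have $o(\moon + X) = \mathcal{N}$ for every $X \in \afimp^\infty$, so the theorem reduces to showing $o(G+X)=\mathcal{N}$ for all $X$; and your observation that the winning reply in $G+\nimber{k}$ cannot be made in the nimber component is also right. Be aware, though, that the paper itself gives no proof of this statement --- it is imported from \cite{LNS21} as background --- so there is no in-paper argument to match, and the burden falls entirely on your middle step, which is where the proposal fails.

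The gap is that the ``dual access'' structure you propose to extract is not merely hard to obtain --- it does not exist in general, so step two cannot be completed as set up. You want a single $k_0$ reachable both by a quiet option equivalent to $\nimber{k_0}$ and by a check option of the exact shape $\{\infty \mid \nimber{k_0}\}$, so that Right's forced reply returns to a predetermined nimber and Left selects the parity. The paper's own example near the end of Section 2, $G \cong \{\nimber{2}, \nimber{3}, \{\infty \mid \nimber{2}, \nimber{3}\} \mid \nimber{2}, \nimber{3}, \{\nimber{2}, \nimber{3} \mid \minfty\}\}$, satisfies the hypothesis (it is $=_{\afimp^\infty} \moon$), yet its only check option is $\{\infty \mid \nimber{2}, \nimber{3}\}$: Right, not Left, chooses which nimber the check resolves to, so no single $k_0$ can be forced. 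What the hypothesis actually yields is that every nimber is either $G$-immediate or $G$-protected, i.e.\ $S_G \cup P_G = \nim$ in the sense of Definition \ref{def10} and the definition following it; ``protected'' (some check option $G^{\overrightarrow{L}}$ with $G^{\overrightarrow{L}} + \nimber{n} \in \mathcal{L}$, meaning \emph{all} of Right's forced replies lose for him) is the correct generalization of your dual access, and the conclusion is then the first bullet of Theorem \ref{thm20}. Alternatively one can invoke the classification that every position of $\afimp^\infty$ is equivalent to some nimber or to $\moon$ and treat an arbitrary $X$ by cases on its class. Two smaller cautions: a winning move to a quiet option $G'$ gives $o(G'+\nimber{k})=\mathcal{P}$, but identifying $G'$ with a member of $\nim$ already uses the classification theorem; and in the line ``Right cannot answer inside $X$'' you must also exclude Right moving a component of $X$ directly to $\minfty$, which ends the game by absorption before Left can cash her check (this only occurs when $X$ is itself a check, in which case the first player wins outright, but it needs to be said).
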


\begin{theorem}
\label{thm20}
    Assume that $G \in \afimp^\infty$. The following holds.
    \begin{itemize}
        \item If $S_G \cup P_G = \nim$, then $G =_{\afimp^\infty} \moon$, and 
        \item If $S_G \cup P_G \neq \nim$, then $G =_{\afimp^\infty} *({\rm mex}(\mathcal{G}(S_G \cup P_G)))$.
    \end{itemize}
\end{theorem}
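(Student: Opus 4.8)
The plan is to verify the defining property of $=_{\afimp^\infty}$ directly, testing $G$ against every $X \in \afimp^\infty$, and to run the whole argument by induction on the rank of the positions involved. Since every position in sight is symmetric, each tested sum lies in $\mathcal{N}$ or $\mathcal{P}$, so it suffices to decide which player wins. The two facts that drive everything are: if $\infty \in W^{\mathcal L}$ (a Left-check) then Left, to move in $W + Z$, wins at once by $W \to \infty$ and $\infty + Z = \infty$ (Axiom \ref{axm:base}), so $o(W+Z) \in \{\mathcal L, \mathcal N\}$ for every $Z \neq \minfty$, and the conjugate statement holds for Right-checks. I first record that in the second case $\infty \notin G^{\mathcal L}$, since otherwise $P_G = \nim$ would force $S_G \cup P_G = \nim$; hence $G$ is itself quiet.

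Case 1 ($S_G \cup P_G = \nim$) is an application of Theorem \ref{thm:loonyismoon}: it is enough to show $o(*k + G) = \mathcal N$ for every $k$. As $*k \in \nim = S_G \cup P_G$, there are three possibilities, and in each the player to move has an immediate winning reply in the $G$-component. If $*k \in S_G$ then $G$ has an option $G' =_{\afimp^\infty} *k$, and $G \to G'$ reaches $G' + *k =_{\afimp^\infty} 0 \in \mathcal P$. If $\infty \in G^{\mathcal L}$ the mover plays to $\infty$ (resp.\ $\minfty$) and wins outright. If $*k \in P_G$ through a check $G^{\overrightarrow{L}}$ with $G^{\overrightarrow{L}} + *k \in \mathcal L$, then $G \to G^{\overrightarrow{L}}$ leaves a position Left wins regardless of turn, and symmetrically for Right. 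Thus $o(*k+G) = \mathcal N$ for all $k$ and $G =_{\afimp^\infty} \moon$.

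Case 2 ($m = \mex(\mathcal{G}(S_G \cup P_G))$) begins with the nimber test $o(G + *m) = \mathcal P$ and $o(G + *k) = \mathcal N$ for $k \neq m$. For $k = m$ I show the previous player wins $G + *m$: against $*m \to *j$ with $j<m$ the previous player answers inside $G$ to reach value $j$ and equalise (exactly as in Case 1, using $j \in \mathcal{G}(S_G \cup P_G)$); against a move $G \to G'$ into a quiet follower, the induction hypothesis gives $G' =_{\afimp^\infty} \moon$ or $G' =_{\afimp^\infty} *j$ with $j \neq m$ (as $*m \notin S_G$), so by Theorem \ref{thm19} or by $*j + *m =_{\afimp^\infty} *(j\oplus m)$ the position $G' + *m$ is an $\mathcal N$-position with the previous player now to move; and against a check move $G \to G^{\overrightarrow{L}}$ the point is that $G^{\overrightarrow{L}}$ is a Left-check, so $o(G^{\overrightarrow{L}} + *m) \in \{\mathcal L, \mathcal N\}$, while $*m \notin P_G$ forces $G^{\overrightarrow{L}} + *m \notin \mathcal L$; hence $o(G^{\overrightarrow{L}} + *m) = \mathcal N$ and the previous player, now to move, wins. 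Given $o(G + *m) = \mathcal P$, the case $k > m$ follows since the mover plays $*k \to *m$, and the case $k < m$ follows exactly as in Case 1.

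It then remains to upgrade the nimber test to $o(G + X) = o(*m + X)$ for arbitrary $X$, by induction on the rank of $X$ and a move-pairing strategy between $G$ and the reference nimber $*m$: moves inside $X$ are mirrored and closed by the induction hypothesis; an opponent move $*m \to *k$ is answered inside $G$ using $k < m$; an opponent move into a quiet follower is answered via its inductive classification together with the nimber test; and a defensive check move $G \to G^{\overrightarrow{L}}$ is neutralised as above, via $o(G^{\overrightarrow{L}} + X) \in \{\mathcal L, \mathcal N\}$ and $*m \notin P_G$. The hard part is the single place where the nimber test does not transfer verbatim, namely an attacking protected-check move, where I must deduce $G^{\overrightarrow{L}} + X \in \mathcal L$ from the defining data $G^{\overrightarrow{L}} + *k \in \mathcal L$ together with $o(*k + X) = \mathcal P$. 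Axiom \ref{axm:base} only guarantees $o(G^{\overrightarrow{L}} + X) \in \{\mathcal L, \mathcal N\}$, and separating $\mathcal L$ from $\mathcal N$ is exactly the entailing strategy-combination difficulty: I expect to run Left's winning strategy for $G^{\overrightarrow{L}} + *k$ alongside the previous-player strategy for $*k + X$ on the equivalent position $G^{\overrightarrow{L}} + X + *k + *k$, the key subtlety being that any entailing (check) reply in one component is forced to be answered locally because of the standing $\infty$-threat, so that the two strategies never interfere. Making this combination rigorous in the presence of checks, rather than any of the routine casework above, is where the real work lies, and it is precisely what the definition of $P_G$ through an $\infty$-threatening option is engineered to support.
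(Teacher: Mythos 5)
This theorem is quoted in the paper as an imported result from the theory of impartial entailing games (\cite{LNS21}); the paper itself gives no proof, so there is nothing internal to compare your argument against. Judged on its own terms, your computation of the nimber test is essentially the right combinatorial content and matches the standard argument: Case 1 is correctly closed by Theorem \ref{thm:loonyismoon}, and your analysis of $o(G+*k)$ in Case 2 (answering $*m\to *j$ inside $G$ via $j\in\mathcal{G}(S_G\cup P_G)$, classifying quiet followers by induction, and killing check options via $*m\notin P_G$) is sound, modulo small omissions such as a Left option that is a Right-check but not a Left-check, which the responder refutes at once by moving to $\minfty$.

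The genuine gap is the one you name yourself: the passage from ``$o(G+*k)$ has the right outcome for every $k$'' to ``$o(G+X)=o(*m+X)$ for every $X\in\afimp^\infty$'' is only sketched, and the move-pairing induction you outline stalls exactly at the attacking protected-check move, where $G^{\overrightarrow{L}}+*k\in\mathcal{L}$ must be upgraded to $G^{\overrightarrow{L}}+X\in\mathcal{L}$. A proof that ends by announcing ``making this rigorous is where the real work lies'' has not done that work. The standard way to close it is not a fresh strategy-combination for each $X$, but the two structural lemmas whose $\eqeq$-analogues this very paper proves for $\elimp$: first, $o(G)=\mathcal{P}\Longleftrightarrow G=_{\afimp^\infty}0$ (the analogue of Theorem \ref{thm:elimpp}, which is where the single strategy-interleaving argument lives), and second, the cancellation step $o(G+H)=\mathcal{P}$ and $o(H+H)=\mathcal{P}$ imply $G=_{\afimp^\infty}H$ (the analogue of Theorem \ref{thm:geqh}). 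With those in hand, your verified fact $o(G+*m)=\mathcal{P}$ together with $o(*m+*m)=\mathcal{P}$ yields $G=_{\afimp^\infty}*m$ in one line, exactly as the paper does for $\eqeq$ in Lemma \ref{lem:eqnimber}. You should either prove those two lemmas for $=_{\afimp^\infty}$ or cite them; as written, the equivalence claim of the theorem is not established.
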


In summary, a position in impartial entailing game equals to $\nimber{n}$ or $\moon$ by equivalence relation $=_{\afimp^\infty}$ and the equivalence class $\afimp^\infty /{=_{\afimp^\infty}}$ is isomorphic to $\{ \nimber{n} \mid n \in \mathbb{N}_0 \} \cup \{\moon \}$.

\section{Main results}

In this study, we consider {\em impartial entailing loopy games}.
In standard impartial entailing games, every position which is not equal to nimber is equal to $\moon$. However, if we add loopy games to them, the equivalence changes.

Consider an impartial loopy position $G \cong \{\infty(\emptyset), \sglstar \mid \infty(\emptyset), \sglstar\}$ and two imparital entailing position $H \cong \{\infty \mid \minfty \}$ and $J \cong \{ *2, \{\infty \mid *2\} \mid *2, \{*2 \mid \minfty\} \}$. In impartial entaling games, $H$ and $J$  equal to  $\moon$, however, if we add them to $G$, the outcomes are different.
$o(G+H) =  \mathcal{N}$ still holds. However, in $G+J$, the next player has no winning move and $o(G + J) = \mathcal{D}$, so we cannot say $H = J$ since the outcomes of $G + H$ and $G+J$ are different.
Therefore, we need to classify moons into more than one equivalence classes.

In this study, we consider $\ims^\infty$, which is a subset of the set of all impartial entailing positions, the set of all positions in impartial loopy positions $\limp$, and the sum of positions in  $\ims^\infty$ and $\limp$ together. We will show their algebraic structure and how we can determine the outcome of such positions.

\begin{definition}
\label{def:specialmoon}
    Let $n$ be an integer. The {\em special moon} of $n$ is 
    $$\spmoon(n)_A \cong \{ \{ \infty \mid \nimber{n} \}, A \mid A, \{ \nimber{n} \mid \minfty \} \}.$$
    Here, $A$ is a set of nimbers and special moons, which must include $\nimber{n}$.
\end{definition}

Note that this definition of special moon includes special moon itself. However, in this study, we assume that entailing positions are short, that is, only finite moves are required to end. Thus this definition is recursively definition and circular references never occur.

\begin{lemma}
\label{lem:spmoonismoon}
    $\spmoon(n)_A =_{\afimp^\infty} \moon$.
\end{lemma}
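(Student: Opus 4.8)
The plan is to invoke Theorem \ref{thm20}, which reduces the claim for $G \cong \spmoon(n)_A$ to the single set identity $S_G \cup P_G = \nim$. First I would dispatch the prerequisite that $G \in \afimp^\infty$. Since every nimber and every special moon is self-conjugate (the latter by induction on the short-game rank, reading off the recursive shape of $\spmoon$), the set $A$ is closed under conjugation, so the Right option set $\{A, \{\nimber{n} \mid \minfty\}\}$ is exactly the conjugate of the Left option set $\{\{\infty \mid \nimber{n}\}, A\}$; hence $G$ is symmetric. Its quiet followers come only from $A$ and from $\nimber{n}$, all of which are themselves symmetric with symmetric quiet followers, giving $G \in \afimp^\infty$. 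The shortness hypothesis is precisely what makes this induction well-founded, so no circularity arises even though $A$ may contain special moons.

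With membership secured, I would compute the two sets. Because $\nimber{n} \in A \subseteq G^\mathcal{L}$ and $\nimber{n} \in \nim$, the requirement that $A$ contain $\nimber{n}$ immediately yields $\nimber{n} \in S_G = G^\mathcal{L} \cap \nim$; this is the one and only place where the defining constraint on $A$ is used. For $P_G$, note $\infty \notin G^\mathcal{L}$, so the second clause of the definition applies, and the unique Left-check among the Left options is $G^{\overrightarrow{L}} = \{\infty \mid \nimber{n}\}$ (neither the nimbers nor the special moons in $A$ are Left-checks). Thus $P_G = \{\nimber{m} \mid \{\infty \mid \nimber{n}\} + \nimber{m} \in \mathcal{L}\}$, and the heart of the proof is evaluating this membership.

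The key computation is to show $\{\infty \mid \nimber{n}\} + \nimber{m} \in \mathcal{L}$ if and only if $m \neq n$. When Left moves first she takes $\infty$ and wins by Axiom \ref{axm:base}, so only the Right-first analysis matters: Right's only move that does not hand Left the $\infty$-win is to resolve the check to $\nimber{n}$, leaving $\nimber{n} + \nimber{m}$ with Left to move; Left then wins exactly when $\nimber{n} + \nimber{m} =_{\afimp^\infty} \nimber{(n \oplus m)} \neq 0$, i.e. when $m \neq n$. Hence $P_G = \nim \setminus \{\nimber{n}\}$, and combining with $\nimber{n} \in S_G$ gives $S_G \cup P_G = \nim$; the first bullet of Theorem \ref{thm20} then delivers $\spmoon(n)_A =_{\afimp^\infty} \moon$.

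I expect the main obstacle to be this last membership computation, specifically arguing rigorously that Right has no escape from the forced reply to $\nimber{n} + \nimber{m}$: one must enumerate every Right option of $\{\infty \mid \nimber{n}\} + \nimber{m}$, including moves inside the $\nimber{m}$ component, and confirm each alternative leaves $\infty$ available to Left. As a sanity check and alternative route, the same forcing argument shows directly that $o(\nimber{k} + \spmoon(n)_A) = \mathcal{N}$ for every $k$ (Left forces $\nimber{n} + \nimber{k}$ via the check when $k \neq n$, and plays to $\nimber{n} \in A$ when $k = n$), so Theorem \ref{thm:loonyismoon} yields the conclusion without computing $P_G$ explicitly.
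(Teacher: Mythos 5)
Your main argument is correct but follows a genuinely different route from the paper. The paper's proof is exactly what you relegate to your final ``sanity check'': it verifies directly that $o(\spmoon(n)_A + \nimber{k}) = \mathcal{N}$ for every $k$ (Left plays to $\nimber{n} + \nimber{n}$ via the option $\nimber{n} \in A$ when $k = n$, and otherwise checks with $\{\infty \mid \nimber{n}\}$ to force $\nimber{n} + \nimber{k} \in \mathcal{N}$), then invokes Theorem \ref{thm:loonyismoon}. Your primary route instead computes $S_G$ and $P_G$ and applies Theorem \ref{thm20}: you correctly identify $\nimber{n} \in S_G$ as the sole use of the constraint $\nimber{n} \in A$, correctly observe that $\{\infty \mid \nimber{n}\}$ is the unique Left-check among the Left options (nimbers and special moons in $A$ are not checks), and correctly evaluate $\{\infty \mid \nimber{n}\} + \nimber{m} \in \mathcal{L}$ iff $m \neq n$, so that $S_G \cup P_G = \nim$. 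The two arguments encode the same forcing idea --- your computation of $P_G$ is precisely the paper's ``Right has no choice but to resolve the check'' step --- but the paper's version is lighter, bypassing the $S_G$/$P_G$ machinery entirely, while yours makes transparent exactly which nimber the hypothesis $\nimber{n} \in A$ supplies and why it is indispensable. You also explicitly verify $G \in \afimp^\infty$ (symmetry and symmetric quiet followers), a prerequisite the paper leaves tacit; that is a small bonus of your presentation. Both routes are valid given the cited theorems.
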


\begin{proof}
    For any $\nimber{k}$, we show $\spmoon(n)_A + \nimber{k} \in \mathcal{N}$.
    When $k = n$, Left can make $\spmoon(n)_A + \nimber{k}$ to $\nimber{k} + \nimber{k} \in \mathcal{P}$ by one move. When $k \neq n$, Left can make $\spmoon(n)_A + \nimber{k}$ to $\{\infty \mid \nimber{n}\} +\nimber{k}$ by one move, and force Right to make it $\nimber{n} + \nimber{k} $. Since $k \neq n$, this position belongs to $\mathcal{N}$. Therefore, $\spmoon(n)_A + \nimber{k} \in \mathcal{N}$ and from Theorem \ref{thm:loonyismoon}, $\spmoon(n)_A =_{\afimp^\infty} \moon$.
\end{proof}

\begin{definition}
\label{def:ims}
Let $\ims^\infty \subset \afimp^\infty$ be the set of all positions $G$ such that $G =_{\afimp^\infty} \nimber{k}$ for an integer $k$, $G \cong \moon$, and $G \cong \ \spmoon(n)_A$ for a nonnegative integer $n$ and a set $A$, which includes $\nimber{n}$.   
\end{definition}

\begin{definition}
\label{def:elimp}
Let $\elimp$ be the set of all positions $G_1 + \cdots + G_n + H_1 + \cdots + H_m$, where $G_1,  \ldots, G_n \in \ims^\infty$ and $H_1,  \ldots, H_m \in \limp$.
\end{definition}

\begin{definition}
\label{def:moonandlimp}
    For any $X \in \elimp$, $o(\infty  + X) = \mathcal{L}$ and $o(\minfty + X) = \mathcal{R}$.
\end{definition}
\begin{corollary}
\label{cor:moonandlimp}
    For any $X \in \elimp$, $o(\moon  + X) = \mathcal{N}$. 
\end{corollary}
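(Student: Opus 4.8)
The plan is to unpack $\moon \cong \{\infty \mid \minfty\}$ and read off the relevant options of $\moon + X$ directly from the disjunctive-sum rule, and then apply Definition~\ref{def:moonandlimp}. Recall that $o(\moon + X) = \mathcal{N}$ means precisely that the player to move at $\moon + X$ has a winning strategy, so it suffices to exhibit, for each of the two players, a winning move available when that player is next.

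First I would observe that, by the definition of disjunctive sum, one Left option of $\moon + X$ is obtained by Left moving within the $\moon$ component, namely $\infty + X$; symmetrically, one Right option of $\moon + X$ is $\minfty + X$. Next I would invoke Definition~\ref{def:moonandlimp}, which gives $o(\infty + X) = \mathcal{L}$ and $o(\minfty + X) = \mathcal{R}$ for every $X \in \elimp$. Consequently, whichever player is to move at $\moon + X$ has a winning move: if Left is next, she moves to $\infty + X$, a position of outcome $\mathcal{L}$ that she wins; if Right is next, he moves to $\minfty + X$, a position of outcome $\mathcal{R}$ that he wins. Since the player to move always wins, $o(\moon + X) = \mathcal{N}$, as claimed.

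The only point meriting a remark is that $X \in \elimp$ is a sum of positions drawn from $\ims^\infty$ and $\limp$, none of which is $\infty$ or $\minfty$; hence $X \notin \{\infty, \minfty\}$, so the sums $\infty + X$ and $\minfty + X$ are well defined (Axiom~\ref{axm:base} item 5 is never triggered) and collapse to $\infty$ and $\minfty$ by the absorbing clauses of Axiom~\ref{axm:base}, which is exactly what makes Definition~\ref{def:moonandlimp} applicable. There is no genuine obstacle here: the corollary is an immediate consequence of the absorbing behaviour of $\infty$ and $\minfty$ together with the form of $\moon$, and the argument does not depend on any further properties of $X$ beyond its membership in $\elimp$.
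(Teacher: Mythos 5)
Your proof is correct and matches the paper's intent: the paper states this as an immediate corollary of Definition~\ref{def:moonandlimp} without writing out a proof, and the argument it implicitly relies on is exactly yours — Left's move from $\moon + X$ to $\infty + X$ and Right's move to $\minfty + X$ each win for the mover, so the first player always wins. Your extra remark that $X \notin \{\infty, \minfty\}$ (so the sums are defined) is a harmless and reasonable addition.
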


\begin{theorem}
\label{thm:elimpndp}
    If $G \in \elimp$, then $G \in \mathcal{N} \cup \mathcal{D} \cup \mathcal{P}$.
\end{theorem}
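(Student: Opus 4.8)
The plan is to exploit the player-swapping symmetry encoded by conjugation. I will show that every $G \in \elimp$ is fixed by the role-swap operation (the conjugation $\conj{\cdot}$, extended to act as the identity on purely loopy summands, which is consistent because those summands satisfy $H^\mathcal{L}=H^\mathcal{R}$), so that $\conj{G} \cong G$; and that passing to the conjugate of a position interchanges the two players, so that $o(\conj{G})$ is obtained from $o(G)$ by swapping $\mathcal{L}$ and $\mathcal{R}$ while fixing $\mathcal{N}$, $\mathcal{P}$, and $\mathcal{D}$. Writing $\mathrm{swap}$ for this interchange, the two facts together give $o(G) = o(\conj{G}) = \mathrm{swap}(o(G))$, and the only outcomes fixed by $\mathrm{swap}$ are $\mathcal{N}$, $\mathcal{P}$, and $\mathcal{D}$. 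Hence $G \in \mathcal{N} \cup \mathcal{D} \cup \mathcal{P}$, which is exactly the assertion; the content is precisely the exclusion of the ``one-sided'' outcomes $\mathcal{L}$ and $\mathcal{R}$.

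First I would establish self-conjugacy of the summands. On a loopy summand $H_j \in \limp$ the ruleset is impartial, so $H_j^\mathcal{L} = H_j^\mathcal{R}$ at every follower, and the role-swap merely exchanges the two equal option sets at each position and therefore fixes $H_j$. On an entailing summand in $\ims^\infty$ the role-swap is the conjugation $\conj{\cdot}$, and I would check the three admissible types directly: $\conj{\nimber{n}} \cong \nimber{n}$, since nimbers are self-conjugate; $\conj{\moon} = \{ \conj{\minfty} \mid \conj{\infty} \} = \{ \infty \mid \minfty \} = \moon$; and for the special moon, using $\conj{\{ \infty \mid \nimber{n} \}} = \{ \nimber{n} \mid \minfty \}$ together with the self-conjugacy of the elements of $A$ (nimbers and, recursively, special moons), one gets $\conj{\spmoon(n)_A} \cong \spmoon(n)_A$; the non-symmetric check options pair up correctly under $\conj{\cdot}$ precisely because a Left-check option conjugates to the matching Right-check option. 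More uniformly, every element of $\ims^\infty \subset \afimp^\infty$ is symmetric, and a symmetric position is self-conjugate once conjugation is seen to be an involution. Since conjugation distributes over disjunctive sums — routine here, because none of the base summands is $\infty$ or $\minfty$, so no undefined sum is ever formed — I conclude $\conj{G} = \conj{G_1} + \cdots + \conj{H_m} \cong G_1 + \cdots + H_m \cong G$.

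Next I would prove the outcome-swap identity $o(\conj{K}) = \mathrm{swap}(o(K))$, the impartial-entailing analogue of the reversal law $o(-K) = -o(K)$. The game tree of $K$ viewed from Left's perspective is isomorphic to the game tree of $\conj{K}$ viewed from Right's perspective, with every verdict swapped; under this isomorphism an immediate Left win $\infty$ corresponds to $\conj{\infty} = \minfty$, an immediate Right win, an ordinary Left win to a Right win, and an infinite (drawing) line to an infinite line. Thus a strategy witnessing a given result for one player in $K$ transforms into a strategy witnessing the swapped result for the other player in $\conj{K}$. Applying this to $\conj{G} \cong G$ yields $o(G) = \mathrm{swap}(o(G))$ and closes the argument.

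The main obstacle is this outcome-swap identity in the combined loopy–entailing framework, not the self-conjugacy bookkeeping, because two features that are absent from the purely loop-free or purely non-entailing theories must be handled at once: the entailing immediate-win positions $\infty$ and $\minfty$, which short-circuit play and must be matched correctly across the swap, and the infinite drawing lines coming from $\limp$, for which ``winning strategy'' must be replaced by ``strategy forcing a win or a draw'' and the mirroring must preserve the draw verdict. I would make this precise by using the rank machinery of Definition~\ref{def_loopy_4.5} to track the finite, decisive portions of a strategy and a direct play-by-play correspondence for the drawing and entailing portions, so that every maximal play in $K$ is paired with a maximal play in $\conj{K}$ carrying the conjugate verdict.
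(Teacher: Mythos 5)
Your proposal is correct and is essentially the paper's own argument made formal: the paper simply observes that since every summand is impartial/symmetric, whatever one player can achieve moving first (win or draw) the other can achieve by the same strategy, which is exactly your $o(G)=\mathrm{swap}(o(G))$ conclusion obtained via self-conjugacy. The conjugation bookkeeping you supply is a rigorous rendering of the same symmetry idea, not a different route.
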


\begin{proof}

    If Left can win $G \in \elimp$ by playing first, Right can also win by using the same strategy. Similarly, if Left can make draw by playing first, Right can make draw by playing first as well.
    
\end{proof}

\begin{definition}
\label{def:eqeq}
    For positions $G$ and $H \in \elimp$, if for any $X \in \elimp$, $o(G+X) = o(H+X)$ then we say $G \eqeq H$.
\end{definition}

\begin{lemma}
\label{lem:eqeq}
    For positions $G, H \in \ims^\infty$,  if $G \eqeq H$ then $G =_{\afimp^\infty} H$. Similarly, for positions $G, H \in \limp$,  if $G \eqeq H$ then $G =_{\limp} H$.
\end{lemma}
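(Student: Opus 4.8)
The plan is to read both halves as statements about \emph{restricting the class of test positions}, using the two containments $\ims^\infty \subseteq \elimp$ and $\limp \subseteq \elimp$ that follow from Definition \ref{def:elimp} by taking a single summand ($n=1,m=0$ in the first case, $n=0,m=1$ in the second).

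The second half is then immediate. Suppose $G,H \in \limp$ and $G \eqeq H$. By Definition \ref{def:eqeq} this means $o(G+X)=o(H+X)$ for every $X \in \elimp$. Restricting attention to those $X$ that already lie in $\limp \subseteq \elimp$ gives $o(G+X)=o(H+X)$ for all $X \in \limp$, which is exactly $G =_{\limp} H$. No computation is required, since the disjunctive sum and the outcome function $o$ do not depend on whether we regard $G+X$ as a position of $\limp$ or of $\elimp$.

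The first half needs one extra step, because $\afimp^\infty$ is \emph{not} a subset of $\elimp$: a general impartial entailing position need not be a sum of $\ims^\infty$-positions and $\limp$-positions, so I cannot simply restrict the test set. Instead I would pass through canonical forms. Let $G,H \in \ims^\infty$ with $G \eqeq H$, and let $X \in \afimp^\infty$ be arbitrary. By Theorem \ref{thm20}, either $X =_{\afimp^\infty} \moon$ or $X =_{\afimp^\infty} \nimber{n}$ for some $n$; write $Y$ for this canonical representative, so that $Y \in \ims^\infty \subseteq \elimp$ and $X =_{\afimp^\infty} Y$. Since $G,H \in \ims^\infty \subseteq \afimp^\infty$, applying the relation $X =_{\afimp^\infty} Y$ with test position $G$ (respectively $H$) yields $o(G+X)=o(G+Y)$ and $o(H+X)=o(H+Y)$. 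Now $Y \in \elimp$, so $G \eqeq H$ gives $o(G+Y)=o(H+Y)$, and chaining the three equalities gives $o(G+X)=o(H+X)$. As $X \in \afimp^\infty$ was arbitrary, $G =_{\afimp^\infty} H$.

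The argument is essentially bookkeeping, and the step I would single out as the crux is the replacement of $X$ by its canonical form $Y$ in the first half. Here one must invoke the definition of $=_{\afimp^\infty}$ in the correct direction, using $G$ and $H$ \emph{themselves} as the test positions (legitimate precisely because $G,H \in \afimp^\infty$), rather than attempting to feed $X$ directly into $G \eqeq H$, which one cannot do since $X$ may fail to be in $\elimp$. Everything else is forced by the containments $\ims^\infty,\limp \subseteq \elimp$ together with the reduction theorem (Theorem \ref{thm20}) for standard impartial entailing games.
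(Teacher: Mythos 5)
Your proposal is correct and follows the same basic route as the paper: both halves are proved by restricting the test positions $X$ ranging over $\elimp$ to the subclasses $\limp$ and $\ims^\infty$ respectively. The one place you go further is the passage from ``$o(G+X)=o(H+X)$ for all $X\in\ims^\infty$'' to $G =_{\afimp^\infty} H$: the paper simply asserts this, whereas you justify it by replacing an arbitrary $X\in\afimp^\infty$ with its canonical representative $Y\in\{\nimber{n},\moon\}\subseteq\ims^\infty$ via Theorem \ref{thm20} and applying $X =_{\afimp^\infty} Y$ with $G$ and $H$ as test positions --- this is exactly the missing justification, and your version is the more complete one.
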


\begin{proof}
    Assume that $G, H \in \ims^\infty$ and $G \eqeq H$. Then for any $X \in \ims^\infty \subset \elimp$, $o(G+X) = o(H+X)$. Thus, $G =_{\ims^\infty} H$. Therefore, $G =_{\afimp^\infty} H$.
    Also, assume that $G, H \in \limp$ and $G \eqeq H$. Then for any $X \in \limp \subset \elimp$, $o(G+X) = o(H+X)$. Thus, $G =_{\limp} H$.
\end{proof}

\begin{theorem}
\label{thm:elimpp}
    Assume that $G \in \elimp$. Then $G \in \mathcal{P}$ if and only if, $G \eqeq 0$.
\end{theorem}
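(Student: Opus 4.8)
The plan is to prove the two implications separately, with the reverse direction being a one-line specialization and the forward direction carrying the entire burden through a strategy-following argument.

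For the direction $G \eqeq 0 \Rightarrow G \in \mathcal{P}$, I would simply instantiate the definition of $\eqeq$ at a convenient test position. Since $\nimber{0} \in \afimp \subset \limp \subset \elimp$, the position $0$ itself lies in $\elimp$, so taking $X = 0$ in Definition \ref{def:eqeq} gives $o(G) = o(G + 0) = o(0 + 0) = o(0)$, using that adding the terminal position leaves the outcome unchanged. As $0$ is terminal it is a $\mathcal{P}$-position, hence $o(G) = \mathcal{P}$.

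For the direction $G \in \mathcal{P} \Rightarrow G \eqeq 0$, I must show $o(G + X) = o(X)$ for every $X \in \elimp$. Because $G$ is a $\mathcal{P}$-position, the previous player owns a winning strategy $\sigma_G$ in $G$ that answers each opponent move $G \to G'$ (necessarily an $\mathcal{N}$-position) by a reply $G' \to G''$ back to a $\mathcal{P}$-position, forcing termination in finitely many of the replier's own moves since $G \in \mathcal{P}_n$ for some finite $n$. By Theorem \ref{thm:elimpndp} we have $o(X) \in \{\mathcal{N}, \mathcal{P}, \mathcal{D}\}$, and I would build a combined strategy in $G + X$ for the appropriate player: answer every opponent move made in the $G$-component by the corresponding $\sigma_G$-reply inside $G$, and otherwise follow a designated $X$-strategy. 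In the case $o(X) = \mathcal{P}$ the previous player answers every opponent move in the component where it was made, keeping both $G$ and $X$ at $\mathcal{P}$-positions after each reply so that the opponent is the first to run out of moves; the case $o(X) = \mathcal{N}$ reduces to this after the next player's winning opening move inside $X$; and the case $o(X) = \mathcal{D}$ follows because each player can combine a drawing strategy in $X$ with $\sigma_G$ in $G$, so neither side can be forced to lose and the sum is $\mathcal{D}$. In all three cases the finiteness built into $\sigma_G$ guarantees the $G$-component is eventually exhausted, so the outcome of $G + X$ is dictated by $X$.

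The hard part will be the interaction between the entailing moves and the following strategy. When the opponent plays a check inside $G$ — for instance driving a special-moon summand to a position like $\{\infty \mid \nimber{n}\}$ and thereby threatening an immediate $\infty$-win on the next turn — the replier is obliged to respond at once, and I must argue that this forced response is always available inside $G$ and coincides with the $\sigma_G$-reply. The decisive observation is that $G \in \mathcal{P}$ forbids any opponent option that is itself a $\mathcal{P}$-position, and in particular forbids an unanswerable check: the $\sigma_G$-reply returns $G$ to a $\mathcal{P}$-position, and at a $\mathcal{P}$-position the opponent (now to move) has no winning continuation at all, so in particular no winning $\infty$-move, meaning the threat has been neutralized without ever abandoning the $X$-strategy. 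I would also need to check that a check in $G$ and a check in $X$ can never be outstanding at the same moment — since a check must be answered before any unrelated move — so that the $G$-responses and the $X$-responses never compete for the same turn. Dovetailing this check-handling with the loopy draw bookkeeping in the $\mathcal{D}$-case, where one must confirm that the combined strategy neither manufactures an unwanted infinite play nor forfeits the guaranteed draw, is the most delicate point of the argument.
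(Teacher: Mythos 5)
Your proposal is correct and follows essentially the same route as the paper: the reverse direction by specializing to $X = 0$, and the forward direction by the strategy-following argument in which the previous player's winning strategy in $G$ is grafted onto the appropriate winning or drawing strategy in $X$. Your treatment is in fact more careful than the paper's two-sentence version of the hard direction, since you explicitly address the forced responses to entailing checks and the termination of the $G$-component, points the paper leaves implicit.
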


\begin{proof}
    If $G \eqeq 0$, then from $o(G) = o(0) = \mathcal{P}$, we have $G\in \mathcal{P}$. Conversely, assume that $G \in \mathcal{P}$. For any $X + G$, if one player has a winning strategy in $X$, the player can use the strategy in $X$ and in $G$, they only use the winning strategy for the previous player when the opponent moved in $G$. Next, if it is the best to make draw in $X$ for each player, in $G+X$, it is the best to use the same strategy in $X$ and use the winning strategy for the previous player in $G$. Therefore, the outcomes of $X$ and $G+X$ are exactly the same, and we have $G \eqeq 0$.
\end{proof}

\begin{corollary}
\label{cor:eqeq}
    The relation     $\eqeq$ is an equivalence relation. In addition, the followings holds:  
    \begin{itemize}
        \item $G \eqeq H \Longrightarrow G + J \eqeq H + J$.
        \item If $J\in \elimp$ has an inverse $J'$, then $G + J \eqeq H + J \Longrightarrow G \eqeq H$.
    \end{itemize}
    
\end{corollary}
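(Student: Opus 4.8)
The plan is to prove Corollary \ref{cor:eqeq} in three pieces, leaning entirely on Theorem \ref{thm:elimpp}, which already characterizes the $\mathcal{P}$-positions in $\elimp$ as exactly the $\eqeq$-class of $0$. First I would establish that $\eqeq$ is an equivalence relation. Reflexivity and symmetry are immediate from the defining condition $o(G+X)=o(H+X)$ for all $X\in\elimp$, since equality of outcomes is itself reflexive and symmetric. Transitivity follows by chaining: if $o(G+X)=o(H+X)$ and $o(H+X)=o(J+X)$ for every $X$, then $o(G+X)=o(J+X)$ for every $X$. This step is routine and I would state it in one or two sentences.

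Next I would prove the congruence property $G \eqeq H \Longrightarrow G+J \eqeq H+J$. Here the key is that disjunctive sum is associative and commutative as an operation on positions (this is built into the recursive definition of $+$ and is used implicitly throughout the paper), so for any test position $X\in\elimp$ we have $(G+J)+X \cong G+(J+X)$ and $(H+J)+X \cong H+(J+X)$. Since $J+X$ is again a position in $\elimp$ (by Definition \ref{def:elimp}, as $\elimp$ is closed under disjunctive sum), and $G\eqeq H$ means outcomes agree on all $\elimp$-test positions including $J+X$, I get $o((G+J)+X)=o(G+(J+X))=o(H+(J+X))=o((H+J)+X)$. Thus $G+J \eqeq H+J$. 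The only thing to check carefully is that $\elimp$ is closed under $+$, which is immediate from its definition as sums of components from $\ims^\infty$ and $\limp$.

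For the cancellation property, suppose $J$ has an inverse $J'$ in $\elimp$, meaning $J+J' \eqeq 0$. Assuming $G+J \eqeq H+J$, I would apply the congruence property just proved, adding $J'$ to both sides to get $(G+J)+J' \eqeq (H+J)+J'$. Using associativity and commutativity, this rearranges to $G+(J+J') \eqeq H+(J+J')$. Then, since $J+J' \eqeq 0$, I would use congruence once more (adding $G$ and $H$ respectively to $J+J'\eqeq 0$) to replace $J+J'$ by $0$, obtaining $G+(J+J') \eqeq G+0$ and $H+(J+J')\eqeq H+0$. Finally, observing that $G+0 \cong G$ and $H+0 \cong H$ (the terminal position $0$ is the identity for disjunctive sum), transitivity of $\eqeq$ yields $G \eqeq H$.

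The main obstacle, such as it is, lies in being precise about the algebraic manipulations with $\eqeq$ rather than with strict isomorphism $\cong$. Because $\eqeq$ is only an outcome-equivalence and not an isomorphism, I cannot substitute $J+J'$ by $0$ inside a sum by mere syntactic rewriting; each such replacement must be justified as an application of the congruence property to an established equivalence. The cleanest way to avoid an error is to treat the congruence property as a lemma and invoke it explicitly at every replacement step, keeping associativity/commutativity (a genuine isomorphism statement about $+$) conceptually separate from the $\eqeq$-congruence. I expect no genuine difficulty beyond this bookkeeping, since Theorem \ref{thm:elimpp} has already done the substantive work of tying $\eqeq$ to the concrete $\mathcal{P}$-outcome.
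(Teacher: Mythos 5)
Your proof is correct; the paper actually states this corollary without any proof, treating it as routine, and your argument (formal equivalence-relation check, congruence via associativity of $+$ and closure of $\elimp$ under sums, then cancellation by adding the inverse and substituting $J+J'\eqeq 0$ through the congruence property) is exactly the standard argument the authors evidently have in mind. The only point worth noting is that, despite your framing, Theorem \ref{thm:elimpp} is not really needed anywhere unless one takes ``inverse'' to mean $J+J'\in\mathcal{P}$ rather than $J+J'\eqeq 0$.
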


\begin{theorem}
\label{thm:geqh}
    For any $G, H \in \elimp$, if $o(G + H) = \mathcal{P}$ and $o(H + H) = \mathcal{P}$, then $G \eqeq H$.
\end{theorem}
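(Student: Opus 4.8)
The plan is to reduce everything to the cancellation machinery already assembled in Corollary~\ref{cor:eqeq}, using Theorem~\ref{thm:elimpp} as the bridge between outcomes and the relation $\eqeq$. The key observation is that the hypothesis $o(H+H) = \mathcal{P}$ says precisely that $H$ serves as its own inverse with respect to $\eqeq$, and that is exactly the ingredient the cancellation statement in Corollary~\ref{cor:eqeq} requires.

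Concretely, I would proceed as follows. First, apply Theorem~\ref{thm:elimpp} to the hypothesis $o(H+H) = \mathcal{P}$ to conclude $H + H \eqeq 0$; this exhibits $H$ as an element of $\elimp$ possessing an inverse, namely $H' = H$ itself. Next, apply Theorem~\ref{thm:elimpp} again, this time to $o(G+H) = \mathcal{P}$, to obtain $G + H \eqeq 0$. Combining these two equivalences through the transitivity of $\eqeq$ (guaranteed by Corollary~\ref{cor:eqeq}, which asserts $\eqeq$ is an equivalence relation) yields $G + H \eqeq 0 \eqeq H + H$, hence $G + H \eqeq H + H$. Finally, since $H$ has an inverse, the cancellation clause of Corollary~\ref{cor:eqeq} lets me cancel the common summand $H$, giving $G \eqeq H$, as desired.

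I do not expect a genuine obstacle here, since the argument is a short algebraic manipulation once the two outcome hypotheses are rewritten via Theorem~\ref{thm:elimpp}. The only subtle point worth double-checking is that the cancellation law in Corollary~\ref{cor:eqeq} is stated for an arbitrary summand $J$ that has an inverse, and that nothing in its proof secretly assumes $J$ is a nimber or otherwise restricted; one must confirm it applies with $J = H$ for a general $H \in \elimp$. Assuming that corollary as stated, the identification of $H$ as self-inverse is the entire content of the proof, and everything else is bookkeeping with the equivalence relation.
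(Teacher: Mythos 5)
Your proof is correct and takes essentially the same route as the paper's: both convert the outcome hypotheses into $G+H \eqeq 0$ and $H+H \eqeq 0$ via Theorem~\ref{thm:elimpp} and then eliminate the common summand $H$ using the algebraic properties of $\eqeq$ from Corollary~\ref{cor:eqeq}. The only cosmetic difference is that the paper carries out the cancellation explicitly as $G \eqeq G + (H+H) \eqeq (G+H)+H \eqeq H$ instead of citing the cancellation clause, which is the same computation.
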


\begin{proof}
    Since $o(G + H) = \mathcal{P}$, $G + H \eqeq 0$, and since $o(H+H) = \mathcal{P}$, $H+H \eqeq 0$. Thus, $G \eqeq G + (H+H) \eqeq (G + H) + H \eqeq H$. Therefore, $G \eqeq H$.
\end{proof}

Next, we consider whether equivalence holds even if the set under consideration is expanded to $\elimp$

\begin{lemma}
\label{lem:eqnimber}
Assume that for $G, H \in \elimp$, $G =_{\afimp^\infty} \nimber{n}$ or $G =_{\limp} \nimber{n}$ holds, and $H =_{\afimp^\infty} \nimber{n}$ or $H =_{\limp} \nimber{n}$ holds. Then, $G \eqeq \nimber{n} \eqeq H$.
    
\end{lemma}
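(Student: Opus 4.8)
The plan is to reduce the $\eqeq$-equivalence, which ranges over all test positions in $\elimp$, to a single outcome computation by invoking Theorem~\ref{thm:geqh}. Since $\eqeq$ is an equivalence relation (Corollary~\ref{cor:eqeq}), it suffices to establish $G \eqeq \nimber{n}$ and, by the identical argument, $H \eqeq \nimber{n}$; transitivity then delivers $G \eqeq \nimber{n} \eqeq H$. So the whole lemma collapses to proving one instance of the form ``$K =_{\afimp^\infty} \nimber{n}$ or $K =_{\limp} \nimber{n}$ implies $K \eqeq \nimber{n}$''.

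First I would record the easy structural facts: $\nimber{n} \in \ims^\infty \subset \elimp$ (Definitions~\ref{def:ims} and~\ref{def:elimp}), so $\nimber{n}$ is itself a position in $\elimp$ and is simultaneously a legitimate test position in $\afimp^\infty$ and in $\limp$; moreover $o(\nimber{n} + \nimber{n}) = \mathcal{P}$, since the sum of a nimber with its own copy is a first-player loss by ordinary Sprague--Grundy theory. The key step is then the claim $o(G + \nimber{n}) = \mathcal{P}$, which I would prove by splitting on the hypothesis. If $G =_{\afimp^\infty} \nimber{n}$, then instantiating the defining quantifier of $=_{\afimp^\infty}$ at the test position $X = \nimber{n} \in \afimp^\infty$ gives $o(G + \nimber{n}) = o(\nimber{n} + \nimber{n}) = \mathcal{P}$. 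If instead $G =_{\limp} \nimber{n}$, then instantiating the defining quantifier of $=_{\limp}$ at $X = \nimber{n} \in \limp$ yields the same equality. Crucially, $o(G + \nimber{n})$ is an intrinsic property of the single position $G + \nimber{n}$, so it is irrelevant which of the two ambient equivalences produced the value.

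With $o(G + \nimber{n}) = \mathcal{P}$ and $o(\nimber{n} + \nimber{n}) = \mathcal{P}$ both in hand, and with $G, \nimber{n} \in \elimp$, Theorem~\ref{thm:geqh} applies to the pair $(G, \nimber{n})$ and concludes $G \eqeq \nimber{n}$. Running the same three lines with $H$ in place of $G$ gives $H \eqeq \nimber{n}$, and transitivity of $\eqeq$ finishes the proof.

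The only conceptually delicate point — the thing that might look like an obstacle — is the mismatch between the restricted test classes ($\afimp^\infty$ or $\limp$) and the strictly larger class $\elimp$ over which $\eqeq$ quantifies: a priori, equality against nimbers in the restricted sense says nothing about sums that mix entailing and loopy components. The resolution is that one never needs to re-verify equality against a general $X \in \elimp$ directly. Theorem~\ref{thm:geqh} upgrades a lone $\mathcal{P}$-outcome into the full $\eqeq$-equivalence, and the required outcome $o(G + \nimber{n}) = \mathcal{P}$ is already extractable from either restricted equivalence precisely because $\nimber{n}$ is a valid test position in both $\afimp^\infty$ and $\limp$. Thus the anticipated hard part — a fresh case analysis over mixed sums — is entirely bypassed.
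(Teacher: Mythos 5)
Your proposal is correct and follows essentially the same route as the paper: extract $o(G+\nimber{n})=\mathcal{P}$ from whichever restricted equivalence holds, note $o(\nimber{n}+\nimber{n})=\mathcal{P}$, and apply Theorem~\ref{thm:geqh} to upgrade to $G \eqeq \nimber{n}$, then repeat for $H$ and use transitivity. Your write-up merely makes explicit the instantiation of the test position $X=\nimber{n}$ that the paper leaves implicit.
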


\begin{proof}
    When $G =_{\afimp^\infty} \nimber{n}$, from the theory of impartial entailing games, $o(G + \nimber{n}) = \mathcal{P}$. Then, from Theorem \ref{thm:geqh}, $G \eqeq \nimber{n}$ holds.
    When $G =_{\limp} \nimber{n}$, from the theory of impartial loopy games, $o(G + \nimber{n}) = \mathcal{P}$. Then, from Theore \ref{thm:geqh}, $G \eqeq \nimber{n}$ holds.
    Similarly, $H \eqeq \nimber{n}$. Therefore, we have
    $$
G \eqeq \nimber{n} \eqeq H.
    $$
\end{proof}

By using this theorem, a position which equal to $\nimber{n}$ in impartial entailing games or impartial loopy games, can be treated as $\nimber{n}$ without worrying whether it belongs to $\ims^\infty$ or $\limp$.

Next, we consider special moon. Then, differ from nimbers, even if $G, H \in \ims^\infty$ satisfy $G =_{\afimp^\infty} H$, sometimes $G \not \eqeq H.$ 
For example,  $\spmoon(1)_A =_{\afimp^\infty} \moon$ and $\spmoon(1)_A + \infty(\emptyset) \in \mathcal{D}$ while $\moon + \infty(\emptyset) \in \mathcal{N}$.
    
However, we have the following lemma.
\begin{lemma}
\label{lem:spnd}
    For any $X \in \elimp$, $\spmoon(n)_A + X \in \mathcal{N} \cup \mathcal{D}$ holds.
\end{lemma}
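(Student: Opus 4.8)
The plan is to show that the special moon $\spmoon(n)_A$ never behaves as a $\mathcal{P}$-position or as an $\mathcal{R}$-position under any disjunctive sum from $\elimp$, so that the only outcomes left are $\mathcal{N}$ and $\mathcal{D}$. By Theorem~\ref{thm:elimpndp} we already know $\spmoon(n)_A + X \in \mathcal{N} \cup \mathcal{D} \cup \mathcal{P}$ (since impartiality rules out genuine $\mathcal{L}$ or $\mathcal{R}$ outcomes on the combined position), so it suffices to exclude the $\mathcal{P}$ case, i.e.\ to show that the next player always has at least a drawing move and is never forced into a losing position as the previous player. Concretely, I would prove that $\spmoon(n)_A + X$ is never a $\mathcal{P}$-position.

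First I would unpack the option structure of $\spmoon(n)_A \cong \{\{\infty \mid \nimber{n}\}, A \mid A, \{\nimber{n} \mid \minfty\}\}$. The crucial feature is the checking option $\{\infty \mid \nimber{n}\}$: from $\spmoon(n)_A + X$ the next player (say Left) can move to $\{\infty \mid \nimber{n}\} + X$, which by Axiom~\ref{axm:base} and Definition~\ref{def:moonandlimp} forces the opponent to respond within the check component, sending the position to $\nimber{n} + X$ with Left to move again. So the net effect of this carry-on move is that Left reaches $\nimber{n} + X$ on her own turn. I would then argue by contradiction: suppose $\spmoon(n)_A + X \in \mathcal{P}$. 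Since it is a $\mathcal{P}$-position, every option must be an $\mathcal{N}$-position (by Theorem~\ref{thm_loopy_4.6}(a), appropriately extended to this setting). In particular the carry-on move leading effectively to $\nimber{n} + X$ would have to leave Left in a losing-or-nonwinning state, but the whole point of the check is that Left retains the move; I would extract from $\spmoon(n)_A + X \in \mathcal{P}$ that $\nimber{n} + X$ must be a position from which the \emph{mover} (Left) cannot win, i.e.\ $\nimber{n} + X \in \mathcal{P} \cup \mathcal{R}$, and by impartiality $\nimber{n} + X \in \mathcal{P}$.

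The key step is then to derive a contradiction from $\nimber{n}+X \in \mathcal{P}$ together with $\spmoon(n)_A + X \in \mathcal{P}$. Because $A$ contains $\nimber{n}$, the next player in $\spmoon(n)_A + X$ also has the direct option $\nimber{n} + X$ (moving within the special-moon component to the element $\nimber{n} \in A$). If $\nimber{n} + X \in \mathcal{P}$, then this direct move exhibits a $\mathcal{P}$-option of $\spmoon(n)_A + X$, which by Theorem~\ref{thm_loopy_4.6}(b) forces $\spmoon(n)_A + X \in \mathcal{N}$, contradicting the assumption that it is a $\mathcal{P}$-position. Conversely, if $\nimber{n}+X \notin \mathcal{P}$, then the carry-on option $\{\infty \mid \nimber{n}\} + X$ forces Right to $\nimber{n}+X$ with Left to move, and since $\nimber{n}+X \in \mathcal{N}$ (the only remaining impartial possibility), Left wins outright, so again $\spmoon(n)_A + X$ admits a winning first move and cannot be in $\mathcal{P}$. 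Either way the $\mathcal{P}$ assumption collapses.

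The main obstacle I anticipate is handling the loopy interaction carefully: when $X$ contains a $\mathcal{D}$-position from $\limp$, the outcome algebra of Theorem~\ref{thm_loopy_4.6} must be applied to the \emph{combined} loopy-entailing position, and I must make sure the carry-on ``forcing'' argument is legitimate even in the presence of infinite play, i.e.\ that Right really has no escape from the check other than moving to $\nimber{n}+X$, which follows from Axiom~\ref{axm:base}(3) since $\infty$ absorbs every summand. I would also need to confirm that the $\mathcal{P}_n$-rank machinery (Definition~\ref{def:loopygvalue} and the surrounding definitions) does not assign $\spmoon(n)_A + X$ a finite rank when $X$ forces a draw; but this is exactly what the dichotomy above establishes, since we show it never lies in $\mathcal{P}$ at all. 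Assembling these cases yields $\spmoon(n)_A + X \in \mathcal{N} \cup \mathcal{D}$ for every $X \in \elimp$, as claimed.
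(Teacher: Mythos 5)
Your overall strategy is the same as the paper's: assume $\spmoon(n)_A + X \in \mathcal{P}$ and derive a contradiction from the two options $\nimber{n}+X$ (available directly because $\nimber{n}\in A$) and $\{\infty\mid\nimber{n}\}+X$ (the check that hands the mover the position $\nimber{n}+X$ with the move retained). However, two steps as written would fail. First, you repeatedly reduce the outcome of $\nimber{n}+X$ to ``$\mathcal{P}$ or $\mathcal{N}$ by impartiality'' (``$\mathcal{P}\cup\mathcal{R}$, and by impartiality $\mathcal{P}$''; later ``$\mathcal{N}$, the only remaining impartial possibility''). In this loopy setting $\nimber{n}+X$ can perfectly well be a $\mathcal{D}$-position --- that is the whole point of the lemma --- and in that case your second branch collapses: the carry-on move only draws, it does not ``win outright.'' The case is easily repaired, but not by the argument you give: one must instead invoke the fact that a $\mathcal{P}$-position has only $\mathcal{N}$-options (Theorem~\ref{thm_loopy_4.6}(a)), so the \emph{direct} option $\nimber{n}+X$ of a putative $\mathcal{P}$-position is forced to be an $\mathcal{N}$-position, which the check then converts into a first-player win. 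That is exactly how the paper's proof is organized, and it is the step your case analysis skips over.

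Second, your claim that ``Right really has no escape from the check other than moving to $\nimber{n}+X$'' is false when some component of $X$ is $\cong\moon=\{\infty\mid\minfty\}$: from $\{\infty\mid\nimber{n}\}+\moon+X''$ Right can answer Left's check with his own move to $\minfty$ in the $\moon$ component, after which the sum is $\minfty$ by Axiom~\ref{axm:base} and Right wins, so the forcing argument breaks. The lemma still holds there, but for a different reason ($\moon$ is absorbing, so the sum is an $\mathcal{N}$-position by Corollary~\ref{cor:moonandlimp}); the paper disposes of the case $X\eqeq\moon$ separately at the outset precisely for this reason, and your proof needs the same carve-out before the forcing step is legitimate.
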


\begin{proof}
    If $X \eqeq \moon$, then obviously $\spmoon(n)_A + X \in \mathcal{N}$. We consider the other cases.
    Then, for $\spmoon(n)_A + X$, Left has a move to $\nimber{n} + X$ and a move to $\{ \infty \mid \nimber{n}\} + X$. For the later move, Right has no choice other than to make it $\nimber{n} + X$. Then, if $\spmoon(n)_A + X$ is a $\mathcal{P}$-position, a option $\nimber{n} + X$ must be an $\mathcal{N}$-position but the next player can force the other player to make the position $\nimber{n} + X$, which causes a contradiction. Therefore, $\spmoon(n)_A + X$ is not a $\mathcal{P}$-position and from Theorem \ref{thm:elimpndp}, it is an $\mathcal{N}$-position or a $\mathcal{D}$-position.
\end{proof}

\begin{lemma}
\label{lem:loopyeq}
Assume that $G =_{\limp} \infty(A)$ and $ H =_{\limp} \infty(B)$ for any set of nonnegative integers $A$ and $B.$ Then,   $G + H \eqeq \infty(\emptyset)$. 
\end{lemma}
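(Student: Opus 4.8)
The plan is to unfold the definition of $\eqeq$ and show that $o(G+H+X)=o(\infty(\emptyset)+X)$ for every $X\in\elimp$. The case $X\in\limp$ is immediate: since $G=_{\limp}\infty(A)$ and $H=_{\limp}\infty(B)$, the loopy sum theorem gives $G+H=_{\limp}\infty(A)+\infty(B)=_{\limp}\infty(\emptyset)$, so the two positions already share every outcome against loopy $X$. The real content is therefore the case where $X$ carries entailing components. To treat $G+H$ and $\infty(\emptyset)$ uniformly, I would call a loopy position $R$ a \emph{draw reserve} if $R=_{\limp}\infty(\emptyset)$; note that both $G+H$ and $\infty(\emptyset)$ are draw reserves, that a draw reserve has no option to $\infty$ or $\minfty$ (being an ordinary loopy position), and that $R+\nimber{n}=_{\limp}\infty(\emptyset)$ is again a draw reserve for every nimber, by the loopy sum rules.

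The key claim is that for every draw reserve $R$ and every $X\in\elimp$, the position $R+X$ is never a $\mathcal{P}$-position. I would prove this by exhibiting a perpetual non-losing strategy for the player to move. Whenever that player has a move in $X$ to $\infty$ (for Left) or $\minfty$ (for Right) they simply win; otherwise they retreat into $R$. Because $R=_{\limp}\infty(\emptyset)$ is a $\mathcal{D}$-position, Theorem~\ref{thm_loopy_4.6}(c) guarantees a reply inside $R$ that keeps the reserve drawing, and since the entailing part of $X$ is short the opponent can make only finitely many genuine advances there; combining these, the mover can prevent termination and hence cannot be caught in any finite rank $\mathcal{P}_n$. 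Thus $R+X\notin\mathcal{P}$.

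Granting the claim, the outcome of $R+X$ is forced to depend on $X$ alone. Indeed $R+X$ is an $\mathcal{N}$-position precisely when the player to move has an immediate winning move, and by the never-$\mathcal{P}$ claim no option of the form $R'+X$ or $R+X'$ can be a $\mathcal{P}$-target; hence the only winning moves are the checks to $\infty$ or $\minfty$ available within $X$ itself, which is a property intrinsic to $X$. When no such check is available, $R+X$ is a $\mathcal{D}$-position by Theorem~\ref{thm:elimpndp}. Since this dichotomy never mentions $R$, we obtain $o(G+H+X)=o(\infty(\emptyset)+X)$ for all $X\in\elimp$, which is exactly $G+H\eqeq\infty(\emptyset)$; this is consistent with the earlier contrast $\moon+\infty(\emptyset)\in\mathcal{N}$ versus $\spmoon(n)_A+\infty(\emptyset)\in\mathcal{D}$, since only the former $X$ offers a direct check.

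The main obstacle is making the perpetual-stalling argument fully rigorous in the presence of entailing forcing. One must verify that the mover's retreat into $R$ survives the opponent's own moves inside the reserve, which may pass through $\mathcal{N}$-positions, and that every forced response to a check in $X$ returns the game to a configuration of the form (draw reserve) $+$ (a strictly smaller entailing remainder, possibly with a nimber absorbed into the reserve), so that the strategy and the induction actually compose. This book-keeping, tracking that the reserve stays $=_{\limp}\infty(\emptyset)$ after absorbing forced nimbers while the entailing part of $X$ strictly decreases, is where the care is needed.
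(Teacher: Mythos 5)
Your overall skeleton (a never-$\mathcal{P}$ lemma for ``reserve $+$ entailing part,'' plus an induction in which the reserve absorbs forced nimbers) is the right shape, but two of your key steps have genuine gaps. First, the perpetual-drift proof of your key claim does not go through as stated. If $R =_{\limp} \infty(\emptyset)$, its options are only guaranteed to have values of the form $\infty(B')$ with $B'$ possibly nonempty; after your mover drifts to such an $R'$, the opponent may move the reserve to a \emph{finite}-valued position $\nimber{b}$-equivalent, the drift is then over, and $\nimber{b} + X$ may perfectly well be a $\mathcal{P}$-position that the mover loses. Theorem~\ref{thm_loopy_4.6}(c) only supplies a drawing reply in the reserve considered \emph{standalone}; it says nothing about the sum $R+X$, where the opponent can alternate between attacking the reserve and issuing checks in $X$. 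Moreover, you later apply the claim to options of the form $R'+X$, but $R'$ is not a draw reserve, so the statement you actually formulate (for positions $=_{\limp}\infty(\emptyset)$ only) is too weak for the use you make of it; repairing this essentially amounts to re-deriving the loopy sum theory inside $\elimp$ rather than quoting it.

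Second, the inference ``hence the only winning moves are the checks to $\infty$ or $\minfty$ available within $X$ itself, which is a property intrinsic to $X$'' is not licensed by the never-$\mathcal{P}$ claim. A move of a special-moon component $\spmoon(n)_A$ to $\{\infty \mid \nimber{n}\}$ is neither a direct win nor a move to a position of $\elimp$ (check positions are not symmetric, hence not in $\ims^\infty$), so your claim says nothing about it; whether it wins is decided only after the forced reply, at $R + \nimber{n} + (\text{rest of } X)$ --- a position that involves $R$, not $X$ alone. It is true that $R+\nimber{n}$ is again a draw reserve, so the question recurses to a strictly smaller entailing part; but that recursion \emph{is} the proof, namely an induction on the number of special-moon components, and it is exactly what you defer to ``book-keeping.'' This is also precisely how the paper proceeds: it splits off $X \eqeq \moon$ (Corollary~\ref{cor:moonandlimp}), then inducts on the number of special moons in $X$, with base case the pure loopy theory and inductive step a case analysis ruling out every first-player move; crucially, its never-$\mathcal{P}$ ingredient is Lemma~\ref{lem:spnd}, which comes from the entailing check-forcing structure and is already proved, so no new loopy drift argument is needed at all. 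In short, your proposal is not wrong in spirit, but the two lemmas it rests on are unproved, and filling them in forces you back onto the paper's induction.
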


\begin{proof}
When $X \eqeq \moon$, $o(G + H + X) = o(\infty(\emptyset) + X) = \mathcal{N}$ holds.

    For any $X \in \elimp (X \not \eqeq \moon)$, we prove $o(G + H + X) = \mathcal{D}$ by induction on the numbers of special moons as components of disjunctive sum in $X$.
    If there is no $\spmoon_A(n)$ in $X$, from the theory of impartial loopy games, $G + H =_{\limp} \infty(\emptyset)$. Thus, $o(G + H + X) = \mathcal{D}$.

    Assume that $o(G + H + X) = \mathcal{D}$ if there are $k$ special moons in  $X$ as components of disjunctive sum. 
    Let $X = X' + \spmoon(n_1)_{A_1} + \spmoon(n_2)_{A_2} + \cdots + \spmoon(n_{k+1})_{A_{k+1}}$ when there are $k+1$ special moons in $X$. Here, in $X'$, there is no special moon as a component of disjunctive sum.
    Since $X$ has special moons as components, from Lemma \ref{lem:spnd},  this position is an $\mathcal{N}$-position or a $\mathcal{D}$-position. Assume that $X$ is an $\mathcal{N}$-position and consider the winning strategy for Left when she is the first player. If Left can win by changing $X'$ to $X''$, since in $X'$ there is no special moon, this move is not check. Therefore, $X'' \in \elimp$ and $(G + H + X'' + \spmoon(n_1)_{A_1} + \cdots + \spmoon(n_{k+1})_{A_{k+1}}) \in \mathcal{N} \cup \mathcal{D} \cup \mathcal{P}$. From Lemma \ref{lem:spnd}, $o(G + H + X'' + \spmoon(n_1)_{A_1} + \cdots + \spmoon(n_{k+1})_{A_{k+1}}) \neq \mathcal{P}$, which contradicts the assumption that Left can win by this move.
    
    Next, consider the case that Left changes $\spmoon(n_i)_{A_i}$ to a $\nimber{n}\in A_i$. Then there are $k$ special moons as components of the disjunctive sum, so from the induction hypothesis, this position is a $\mathcal{D}$-position. Therefore, this is not a winning move for Left. Also, if Left changes $\spmoon(n_i)_{A_i}$ to $\spmoon(n')_{A'}\in A_i$, then the whole position has a special moon as a component, so the whole position is an $\mathcal{N}$-position or a $\mathcal{D}$-position. Thus, this is not a winning move for Left.

    Next, consider the case that Left changes $\spmoon(n_i)_{A_i}$ to $\{\infty \mid \nimber{ n_i}\}$. Immediately after Left's move, Right must  change it to $\nimber{n_i}$ and there are $k$ special moons in the obtained position. Then there are $k$ special moons in the whole position so it is a $\mathcal{D}$-position and Left does not win.

    Finally, consider the case that Left move to $G$ or $H$. Then, there are still special moons, so the whole position is an $\mathcal{N}$-position or a $\mathcal{D}$-position, which means the Left's move is not a winning move.

    Therefore, $G + H + X$ is not a $\mathcal{P}$-position neither an $\mathcal{N}$-position, which means it is a $\mathcal{D}$-position.
    Then we can conclude for any $X \not \eqeq \moon$, $o(G+H+X) = \mathcal{D}$.
    Similarly, we can prove that $o(\infty(\emptyset) +X) = \mathcal{D}$ by induction and we have $G+H \eqeq \infty(\emptyset)$.
\end{proof}

\begin{lemma}
\label{lem:lospnd}
    Assume that $G =_{\limp} \infty(B)$ for a set of nonnegative integers $B$. Let $t$ and $k$ be nonnegative integers. Then, 
    the outcome of $\nimber{k} + \spmoon(n_1)_{A_1} + \cdots + \spmoon(n_t)_{A_t} + G$ is $\mathcal{N}$ when $(k \oplus n_1 \oplus \cdots \oplus n_t) \in B$ and otherwise $\mathcal{D}$.
\end{lemma}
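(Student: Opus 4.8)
The plan is to induct on the number $t$ of special-moon summands, keeping $k$, the special moons, and the loopy component $G$ all universally quantified, so that the inductive hypothesis applies to every position with fewer special moons. First I would dispose of the value $\mathcal{P}$: for $t \geq 1$ the position has a special-moon summand, so writing it as $\spmoon(n_1)_{A_1} + X$ with $X \in \elimp$ and invoking Lemma \ref{lem:spnd} shows it is never a $\mathcal{P}$-position; hence it suffices to decide $\mathcal{N}$ versus $\mathcal{D}$, i.e. whether the first player (Left, say) has a move to a $\mathcal{P}$-position. For the base case $t = 0$, the loopy addition rule gives $\nimber{k} + G =_{\limp} \infty(k \oplus B)$, whose outcome is $\mathcal{N}$ exactly when $0 \in k \oplus B$, i.e. $k \in B$, and $\mathcal{D}$ otherwise; this matches the claim since $m := k \oplus n_1 \oplus \cdots \oplus n_t$ equals $k$ here.

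For the inductive step in the case $m \in B$, I would produce an explicit winning move: Left plays the check $\spmoon(n_t)_{A_t} \to \{\infty \mid \nimber{n_t}\}$. I would then argue the resulting position $Q$ (Right to move) is a $\mathcal{P}$-position by checking every Right reply is an $\mathcal{N}$-position. The only reply that neutralises the check is $\to \nimber{n_t}$, producing $\nimber{k \oplus n_t} + \spmoon(n_1)_{A_1} + \cdots + \spmoon(n_{t-1})_{A_{t-1}} + G$, a position with $t-1$ special moons and the same base $m \in B$, hence $\mathcal{N}$ by the inductive hypothesis; any other Right reply leaves the check standing, after which Left moves that component to $\infty$ and wins by the absorbing property of $\infty$ (Axiom \ref{axm:base}), since no summand is $\minfty$, so those replies are $\mathcal{N}$ as well. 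Thus all options of $Q$ are $\mathcal{N}$, so $Q \in \mathcal{P}$ and the position is $\mathcal{N}$.

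In the case $m \notin B$, I would show Left has no move to a $\mathcal{P}$-position, whence the position is $\mathcal{D}$, running through the move types. A move inside $\nimber{k}$, inside $G$, or from some $\spmoon(n_j)_{A_j}$ to another special moon in $A_j$ all leave at least one special-moon summand, so the result is not $\mathcal{P}$ by Lemma \ref{lem:spnd}; a move $\spmoon(n_j)_{A_j} \to \nimber{a} \in A_j$ lands in a position with $t-1$ special moons, which by the inductive hypothesis is $\mathcal{N}$ or $\mathcal{D}$ but never $\mathcal{P}$; and the check move $\spmoon(n_j)_{A_j} \to \{\infty \mid \nimber{n_j}\}$ reaches a position admitting the Right reply $\to \nimber{k \oplus n_j} + \sum_{i \neq j}\spmoon(n_i)_{A_i} + G$, which has base $m \notin B$ and $t-1$ special moons, so is $\mathcal{D}$ by the inductive hypothesis; since this reply is not an $\mathcal{N}$-position, the position after the check fails the criterion of Theorem \ref{thm_loopy_4.6} for being $\mathcal{P}$. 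Hence no Left move wins, and the position is $\mathcal{D}$.

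I expect the main obstacle to be the bookkeeping around the check moves, which momentarily produce the summand $\{\infty \mid \nimber{n_j}\} \notin \ims^\infty$ and thus leave $\elimp$: the argument must always pair such a move with the opponent's forced reply (or the immediate $\infty$-win if the reply is withheld), so that the position re-enters $\elimp$ with one fewer special moon and the induction can bite. A secondary point to handle carefully is the legitimacy of applying the loopy outcome characterisations (all options $\mathcal{N}$ $\Leftrightarrow$ $\mathcal{P}$, and an option that is not $\mathcal{N}$ $\Rightarrow$ not $\mathcal{P}$, in the spirit of Theorem \ref{thm_loopy_4.6}) within this mixed entailing--loopy setting, together with the observation that once the check is neutralised the base value $m$ is unchanged, because $\spmoon(n_j)_{A_j}$ is replaced by $\nimber{n_j}$ and $k \oplus n_j$ reabsorbs the contribution $n_j$.
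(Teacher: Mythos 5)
Correct, and essentially the same approach as the paper's proof: induction on $t$, ruling out $\mathcal{P}$ via Lemma \ref{lem:spnd}, the same check move on $\spmoon(n_t)_{A_t}$ as Left's winning move when $k \oplus n_1 \oplus \cdots \oplus n_t \in B$, and the same exhaustive analysis of Left's options otherwise. Your only departures are minor streamlinings: you drop the paper's secondary induction on $k$ by noting that a move inside $\nimber{k}$ or inside $G$ still leaves a special moon standing and so cannot reach a $\mathcal{P}$-position (which also sidesteps the paper's slightly imprecise claim that such a move yields a $\mathcal{D}$-position), and you make explicit the check semantics --- that a non-neutralizing reply loses to an immediate move to $\infty$ --- which the paper compresses into ``Right has to move there.''
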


\begin{proof}
    We prove it by induction on $t$. When $t = 0$, from the theory of impartial loopy games, it is an $\mathcal{N}$-position when $k \in B$ and otherwise it is a $\mathcal{D}$-position. Consider the case $t > 0$. Then, there is a component $\spmoon(n_1)_{A_1}$ of disjunctive sum, so from Lemma \ref{lem:spnd}, it is an $\mathcal{N}$-position or a $\mathcal{D}$-position.
    
    If $(k \oplus n_1 \oplus \cdots \oplus n_t) \in B$, then Left moves $\spmoon(n_t)_{A_t}$ to $\{ \infty \mid \nimber{n_t}\}$ when she is the next player. Then Right has to move there to $\nimber{n_t}$ immediately after the Left's move. Then the whole position changes to $\nimber{(k \oplus n_t)} + \spmoon(n_1)_{A_1} + \cdots + \spmoon(n_{t-1})_{A_{t-1}} + G$, and since $((k \oplus n_t) \oplus n_1 \oplus \cdots \oplus n_{t-1}) \in B$, from the induction hypothesis, this is an $\mathcal{N}$-position. Therefore, from the original position, Left can win when she is the first player. It means Right can also win when he is the first player.
    
    Next, we consider the case $(k \oplus n_1 \oplus \cdots \oplus n_t) \not \in B$.
    For each induction step on $t$, we also use induction on $k$.
    If Left changes $\nimber{k}$ to $\nimber{k'}(k'<k)$ when she is the first player,  then from the induction of $k$, the whole position is a $\mathcal{D}$-position.
    Next, if Left changes $\spmoon(n_i)_{A_i}$ to $\nimber{n}\in A_i$, then from the induction hypothesis on $t$,  the whole position is an $\mathcal{N}$-position or a $\mathcal{D}$-position. If Left moves $\spmoon(n_i)_{A_i}$ to $\spmoon(n')_{A'} \in A_i$, then the whole positions is an $\mathcal{N}$-position or a $\mathcal{D}$-position because it has a special moon as a component of disjunctive sum.
    Also, if Left changes $\spmoon(n_i)_{A_i}$ to $\{ \infty \mid \nimber{n_i}\}$, then Right moves it to $\nimber{n_i}$ immediately after Left's move. Then the position is a  $\mathcal{D}$-position from the induction hypothesis.
    
    Finally, if Left move to $G$, then still $\spmoon(n_1)_{A_1}$ is remaining as a component of disjunctive sum and the whole position is an $\mathcal{N}$-position or a $\mathcal{D}$-position. Therefore, Left cannot win but make draw when she is the first player, which means the original position is a $\mathcal{D}$-position.
    
\end{proof}
\begin{lemma}
    Assume that positions $G$ and $H$ satisfies $G =_{\limp} H =_{\limp} \infty(B)$ for a set of nonnegative integers $B$. Then, $G \eqeq H$.
\end{lemma}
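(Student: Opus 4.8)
The plan is to prove that for every $X \in \elimp$ the outcome $o(G+X)$ is determined by the set $B$ together with $X$ alone, never by the particular representative $G$; running the identical computation on $H$ then yields $o(H+X)$, and since $H =_{\limp} \infty(B)$ carries the same $B$, the two outcomes must coincide, which is exactly $G \eqeq H$. First I would fix an arbitrary $X \in \elimp$ and, using Definitions \ref{def:ims} and \ref{def:elimp}, write it as a disjunctive sum whose summands are of four kinds: (i) positions that are $=_{\afimp^\infty}$ to a nimber, (ii) copies of $\moon$, (iii) special moons $\spmoon(n_i)_{A_i}$, and (iv) loopy positions in $\limp$.

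If any summand of type (ii) occurs, I would dispose of $X$ at once. Writing $X = \moon + X'$ with $X' \in \elimp$, for every $Y \in \elimp$ we have $X' + Y \in \elimp$, so $o(X+Y) = o(\moon + (X'+Y)) = \mathcal{N} = o(\moon + Y)$ by Corollary \ref{cor:moonandlimp}; hence $X \eqeq \moon$, and $o(G+X) = o(\moon + G) = \mathcal{N} = o(\moon + H) = o(H+X)$, again by Corollary \ref{cor:moonandlimp} and $G, H \in \elimp$. So I may assume no $\moon$ summand is present.

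Next I would collapse the remaining summands using that $\eqeq$ is a congruence (Corollary \ref{cor:eqeq}). The type-(i) summands together form a position $S \in \afimp^\infty \cap \elimp$ with $S =_{\afimp^\infty} \nimber{k}$ for some $k$ (nimbers add by exclusive-or), so $S \eqeq \nimber{k}$ by Lemma \ref{lem:eqnimber}; the type-(iii) summands remain as $\spmoon(n_1)_{A_1} + \cdots + \spmoon(n_t)_{A_t}$; and the type-(iv) summands form a single loopy position $L \in \limp$ since $\limp$ is closed under sums. Therefore $X \eqeq \nimber{k} + \spmoon(n_1)_{A_1} + \cdots + \spmoon(n_t)_{A_t} + L$, and applying the definition of $\eqeq$ with test position $G$ gives
$$o(G+X) = o\left(\nimber{k} + \spmoon(n_1)_{A_1} + \cdots + \spmoon(n_t)_{A_t} + (L+G)\right).$$
Now $L + G \in \limp$, and since $G =_{\limp} \infty(B)$ and $=_{\limp}$ respects addition, $L + G =_{\limp} \infty(B'')$, where the loopy addition rules fix $B''$ as a function of $B$ and $L$ alone (namely $B'' = \emptyset$ if $L$ has an $\infty(\cdot)$ component, and $B'' = c \oplus B$ if $L =_{\limp} \nimber{c}$). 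I would then invoke Lemma \ref{lem:lospnd} with loopy component $L+G$ to conclude that this outcome is $\mathcal{N}$ when $k \oplus n_1 \oplus \cdots \oplus n_t \in B''$ and $\mathcal{D}$ otherwise.

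Finally, the same argument applied to $H$ gives $L + H =_{\limp} \infty(B'')$ with the \emph{same} $B''$ (because $H =_{\limp} \infty(B)$ as well), so the identical criterion $k \oplus n_1 \oplus \cdots \oplus n_t \in B''$ governs $o(H+X)$; hence $o(G+X) = o(H+X)$ for every $X \in \elimp$, i.e. $G \eqeq H$. The substantive work has already been carried out in Lemma \ref{lem:lospnd}, which quarantines the special moons and reduces their interaction with a loopy position to membership in $B''$. The one point I expect to need care is conceptual rather than computational: special moons are entailing positions lying outside $\limp$, so the hypothesis $G =_{\limp} H$ cannot be applied to $X$ directly and must be routed through the invariant $B''$ extracted from the purely loopy part $L+G$; verifying that $B''$ depends only on $B$ and $L$ is then immediate from the addition rules for $\infty(A)$.
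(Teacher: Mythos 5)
Your proposal is correct and follows essentially the same route as the paper: the paper also splits on whether $X$ contains a loopy $\infty(\cdot)$ component (handled via Lemma \ref{lem:loopyeq}) or reduces to $\nimber{k}$ plus special moons (handled via Lemma \ref{lem:lospnd}), with those two lemmas doing all the real work. Your write-up is merely more explicit than the paper's two-sentence proof about the $\moon$ case, the congruence reductions, and the extraction of the invariant set $B''$, all of which the paper leaves implicit.
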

\begin{proof}
   We show that for any position $X \in \elimp$, $o(G + X) = o(H + X)$. First, if a position $J =_{\limp} \infty(C)$ is a component of $X$, then from Lemma \ref{lem:loopyeq}, $o(G+X) = o(H+X)$. Also, for the case of $X = \nimber{k} + \spmoon(n_1)_{A_1} + \spmoon(n_2)_{A_2} + \cdots + \spmoon(n_t)_{A_t}$, from Lemma \ref{lem:lospnd}, $o(G+X) = o(H+X)$ holds.
\end{proof}

Therefore, the following corollary holds.

\begin{corollary}
    For any $G$ and $H \in \limp, $ if $G =_{\limp} H$ then $G \eqeq H$.
\end{corollary}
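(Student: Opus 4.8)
The plan is to reduce everything to the classification of $\limp/{=_{\limp}}$ already recalled in the excerpt: every position of $\limp$ is, under $=_{\limp}$, equal either to a nimber $\nimber{n}$ for some $n \in \mathbb{N}_0$, or to $\infty(B)$ for some $B \subsetneq \mathbb{N}_0$. Since by hypothesis $G =_{\limp} H$, the two positions lie in the same $=_{\limp}$-class, so it suffices to split into these two cases and observe that in each one the conclusion $G \eqeq H$ is already available from an earlier result. Note also that $\limp \subseteq \elimp$ (a single loopy position is the $n=0$, $m=1$ instance of Definition \ref{def:elimp}), so both $G$ and $H$ are legitimate elements of $\elimp$ and the relation $\eqeq$ genuinely applies to them.

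First I would dispose of the nimber case. If $G =_{\limp} \nimber{n}$ and $H =_{\limp} \nimber{n}$ for a common $n$, then Lemma \ref{lem:eqnimber} applies directly, taking both hypotheses in the ``$=_{\limp} \nimber{n}$'' alternative, and yields $G \eqeq \nimber{n} \eqeq H$; transitivity of $\eqeq$, recorded in Corollary \ref{cor:eqeq}, then gives $G \eqeq H$. The remaining case is when $G =_{\limp} \infty(B)$ and $H =_{\limp} \infty(B)$ for a common $B \subsetneq \mathbb{N}_0$. This is exactly the content of the lemma immediately preceding the corollary, which asserts $G \eqeq H$ whenever $G =_{\limp} H =_{\limp} \infty(B)$, so nothing further is needed there.

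The one point requiring care, and the place where I would expect scrutiny, is the completeness of the case split rather than any computation: one must invoke the classification theorem to guarantee that no $=_{\limp}$-class is overlooked, and then confirm that $G$ and $H$ land in the \emph{same} class, which is precisely what the hypothesis $G =_{\limp} H$ encodes. Once those two observations are in place the corollary is purely a bookkeeping consequence of Lemma \ref{lem:eqnimber}, the preceding $\infty(B)$-lemma, and the transitivity supplied by Corollary \ref{cor:eqeq}; there is no residual hard step, since the genuine work, in particular the delicate induction on the number of special moons, has already been carried out in the two lemmas being cited.
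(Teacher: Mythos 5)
Your proof is correct and follows essentially the same route as the paper, which states this corollary with no explicit proof beyond the word ``Therefore'': the intended argument is precisely the case split you describe, using the classification of $\limp/{=_{\limp}}$ into nimber classes and $\infty(B)$ classes, with Lemma \ref{lem:eqnimber} handling the former and the immediately preceding lemma handling the latter. Your explicit attention to the completeness of the case split and to the fact that $G =_{\limp} H$ forces both positions into the same class is a useful filling-in of details the paper leaves implicit.
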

\begin{corollary}
The followings holds: 
$$
\nimber{n_1} + \nimber{n_2} \eqeq
\nimber{(n_1 \oplus n_2)}
$$    
$$
\nimber{n} + \infty(A) \eqeq \infty(n \oplus A)
$$
$$
\infty(A) + \infty(B) \eqeq
\infty(\emptyset)
$$
Here, for any integer $n$ and set $A$, $n \oplus A = \{n \oplus a \mid a \in A\}$.

\end{corollary}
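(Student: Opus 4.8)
The plan is to reduce each of the three $\eqeq$-identities to the corresponding identity for $=_{\limp}$, which is already available from the theory of impartial loopy games, and then to upgrade $=_{\limp}$ to $\eqeq$ using the corollary established immediately above (for $G, H \in \limp$, $G =_{\limp} H$ implies $G \eqeq H$).

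First I would check that every position occurring in the three equations is an element of $\limp$. Each nimber $\nimber{k}$ lies in $\afimp \subset \limp$, each $\infty(A)$ is by definition a position of $\limp$, and $\limp$ is closed under disjunctive sum; hence the left-hand sides $\nimber{n_1} + \nimber{n_2}$, $\nimber{n} + \infty(A)$, and $\infty(A) + \infty(B)$ are all in $\limp$, and so are the right-hand sides $\nimber{(n_1 \oplus n_2)}$, $\infty(n \oplus A)$, and $\infty(\emptyset)$. This is exactly what lets me invoke the preceding corollary, which is stated for pairs of positions in $\limp$.

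Next I would recall the three loopy identities stated earlier as a theorem, namely $\nimber{n_1} + \nimber{n_2} =_{\limp} \nimber{(n_1 \oplus n_2)}$, $\nimber{n} + \infty(A) =_{\limp} \infty(n \oplus A)$, and $\infty(A) + \infty(B) =_{\limp} \infty(\emptyset)$. Applying the corollary ``$G =_{\limp} H \Longrightarrow G \eqeq H$'' to each of these three equalities immediately yields the three claimed $\eqeq$-identities, which completes the argument.

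I do not expect a genuine obstacle here: the substance is carried entirely by Lemmas \ref{lem:loopyeq} and \ref{lem:lospnd}, which already extend $=_{\limp}$ to $\eqeq$ by testing against sums that may contain special moons, and the preceding corollary merely packages them. The only point demanding a little care is the closure observation above, ensuring the composite sums really are members of $\limp$ so that the corollary applies verbatim; alternatively, one could argue componentwise and appeal to the fact that $\eqeq$ respects addition (Corollary \ref{cor:eqeq}), but the direct application is cleaner.
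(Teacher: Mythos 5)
Your proposal is correct and matches the paper's (implicit) argument: the corollary is stated immediately after the result that $G =_{\limp} H$ implies $G \eqeq H$ for $G, H \in \limp$, and it is obtained exactly by applying that result to the three $=_{\limp}$ identities from the loopy-game theorem. Your extra check that the sums remain in $\limp$ is a reasonable bit of care that the paper leaves tacit.
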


The results so far can be summarized as follows.

\begin{theorem}
\label{thm:outcomeelimp}
    Let a position $G \in \elimp$ be
    \begin{eqnarray}G &\cong& b \cdot \moon + \nimber {k_1}  + \cdots + \nimber{k_s} + \spmoon(n_1)_{A_1}  + \cdots + \spmoon(n_t)_{A_t} \nonumber\\  &&+ \infty(B_1) + \cdots + \infty(B_u) \nonumber
    \end{eqnarray}
    . Then the outcome of $G$ is determined as follows: 
    \begin{itemize}
        \item [(i)] If $b \geq 1$, then $o(G) = \mathcal{N}$.
        \item [(ii)] If $b = 0$ and $u \geq 2$, then $o(G) = \mathcal{D}$.
        \item[(iii)] If $b = 0$ and $u = 1, $  then 
        $$ o(G) = 
        \left \{ \begin{array}{ll}
            \mathcal{N} & (k_1 \oplus \cdots k_s \oplus n_1 \oplus \cdots \oplus n_t \in B_1 )\\
            \mathcal{D} & (k_1 \oplus \cdots k_s \oplus n_1 \oplus \cdots \oplus n_t \not \in B_1 ) .
        \end{array} \right .
        $$
        
        \item [(iv)] If $b = 0, u = 0, $ and $t \geq 1$, then $o(G) = \mathcal{N}$.
        \item [(v)] If $b = 0, u = 0,$ and $t = 0$, then 
        $$ o(G) = 
        \left \{ \begin{array}{ll}
            \mathcal{P} & (k_1 \oplus \cdots \oplus k_s  = 0)\\
            \mathcal{N} & (k_1 \oplus \cdots \oplus k_s  \neq 0 ).
        \end{array} \right .
        $$
    \end{itemize}

\end{theorem}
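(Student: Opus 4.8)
The plan is to dispatch the five cases essentially independently, in each reducing to a result already proved, and to reserve the genuinely new argument for case (iv). The two extreme cases are immediate. For (i), writing $G \cong \moon + G'$ with $G' \in \elimp$, Corollary \ref{cor:moonandlimp} gives $o(G) = o(\moon + G') = \mathcal{N}$ at once. For (v) the position is a pure sum of nimbers, so its outcome is exactly the one recorded in Corollary \ref{cor:normalimp}.

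For case (iii) I would first collapse the nimber part. By the nimber-addition rule $\nimber{k_1} + \cdots + \nimber{k_s} \eqeq \nimber{K}$ with $K = k_1 \oplus \cdots \oplus k_s$, and since $\eqeq$ is a congruence (Corollary \ref{cor:eqeq}) that preserves outcome (take $X = 0$ in Definition \ref{def:eqeq}), we have $o(G) = o(\nimber{K} + \spmoon(n_1)_{A_1} + \cdots + \spmoon(n_t)_{A_t} + \infty(B_1))$. This is exactly the hypothesis of Lemma \ref{lem:lospnd}, which yields $\mathcal{N}$ when $K \oplus n_1 \oplus \cdots \oplus n_t \in B_1$ and $\mathcal{D}$ otherwise, matching (iii). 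Case (ii) then reduces to (iii): using Lemma \ref{lem:loopyeq} and the congruence of $\eqeq$, I would combine the loopy summands pairwise, $\infty(B_1) + \cdots + \infty(B_u) \eqeq \infty(\emptyset)$ (each intermediate sum is again of the form $\infty(\cdot) + \infty(\cdot)$), so that $G$ is $\eqeq$ to a position of the shape treated in (iii) with the single loopy summand $\infty(\emptyset)$. Since no integer belongs to $\emptyset$, the outcome is forced to $\mathcal{D}$.

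The remaining case (iv) is the crux, since there is no loopy summand and so no earlier lemma applies directly. By Lemma \ref{lem:spnd} we already know $G \in \mathcal{N} \cup \mathcal{D}$, so it suffices to exhibit a first-player win. The idea is to exploit the check $\{\infty \mid \nimber{n_i}\}$ available at each $\spmoon(n_i)_{A_i}$: playing it forces the opponent to reply $\to \nimber{n_i}$, so the exchange converts a special moon into the nimber $\nimber{n_i}$ without surrendering the move. The first player converts $\spmoon(n_2)_{A_2}, \ldots, \spmoon(n_t)_{A_t}$ in this way, reaching $\nimber{K'} + \spmoon(n_1)_{A_1}$ on the move, where $K' = k_1 \oplus \cdots \oplus k_s \oplus n_2 \oplus \cdots \oplus n_t$; set $N = K' \oplus n_1$. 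If $N \neq 0$ the first player converts the last special moon as well and then plays the resulting pure nimber sum (of value $N \neq 0$) down to $0$; if $N = 0$ the first player instead moves $\spmoon(n_1)_{A_1} \to \nimber{n_1}$ directly (legal since $\nimber{n_1} \in A_1$), producing a pure nimber sum of value $0$ with the opponent to move. Either way the opponent is handed a $\mathcal{P}$-position, so $o(G) = \mathcal{N}$.

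The one point that demands care — and the reason (iv), rather than the reductions, is the main obstacle — is verifying that each conversion exchange really is forced and tempo-neutral in the entailing setting. Concretely, if the opponent ever ignores the check $\{\infty \mid \nimber{n_i}\}$, including by launching a counter-check $\{\nimber{n_j} \mid \minfty\}$, then the first player's reply $\to \infty$ turns the whole sum into $\infty$ and wins immediately; this is valid because none of the remaining nimbers or special moons can be moved to a bare $\minfty$ summand in a single step. This forcing mechanism is the same one underlying Lemmas \ref{lem:spnd} and \ref{lem:lospnd}, so I would invoke it rather than re-derive it, and once it is in hand the bookkeeping in the previous paragraph is routine.
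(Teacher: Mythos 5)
Your proposal is correct, and for four of the five cases it follows the same route the paper takes: the paper's proof is a one-line citation of Corollary \ref{cor:moonandlimp} for (i), Lemma \ref{lem:loopyeq} for (ii), Lemma \ref{lem:lospnd} for (iii), and Corollary \ref{cor:normalimp} for (v), exactly the reductions you perform. The only divergence is case (iv). You assert that ``no earlier lemma applies directly'' there, but that is not so: with $b=u=0$ the position lies entirely in $\afimp^\infty$, so Lemma \ref{lem:spmoonismoon} gives $\spmoon(n_i)_{A_i} =_{\afimp^\infty} \moon$ and Theorem \ref{thm19} absorbs everything into $\moon$, whence $o(G)=o(\moon)=\mathcal{N}$ in one step --- this is the route the paper intends (it cites precisely those two results). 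Your substitute, an explicit first-player strategy that cashes each special moon via the check $\{\infty \mid \nimber{n_i}\}$ and then plays Nim, is nevertheless sound: the forcing step is justified because an opponent who ignores a check hands you the move to $\infty$, and no single reply can create a bare $\minfty$ component, so Axiom \ref{axm:base} applies; and your split on whether $N = k_1 \oplus \cdots \oplus k_s \oplus n_1 \oplus \cdots \oplus n_t$ vanishes (cash the last moon by check if $N\neq 0$, quietly if $N=0$, using $\nimber{n_1}\in A_1$) correctly lands the opponent on a zero nimber sum. What the direct argument buys is self-containedness and an explicit winning move; what it costs is re-deriving, for this special case, the forcing mechanism already packaged in Lemma \ref{lem:spmoonismoon} and Theorem \ref{thm19}.
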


\begin{proof}
    This follows from Corollary \ref{cor:moonandlimp}, Lemmas \ref{lem:loopyeq}, \ref{lem:lospnd}, \ref{lem:spmoonismoon}, Theorem \ref{thm19}, and Corollary \ref{cor:normalimp}.
\end{proof}

\begin{corollary}
Assume that $G \cong \spmoon(n)_{A_1}, H \cong \spmoon(n)_{A_2}$ for any nonnegative integer $n$ and sets $A_1, A_2$ which have $n$ as elements. Then, $G \eqeq H$.
\end{corollary}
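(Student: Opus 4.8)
The plan is to prove this final corollary directly from Theorem~\ref{thm:outcomeelimp}, the structural outcome classification that has just been established. Since $G \cong \spmoon(n)_{A_1}$ and $H \cong \spmoon(n)_{A_2}$ are both special moons sharing the same parameter $n$, the idea is that when either is added to an arbitrary test position $X \in \elimp$, the outcome of the resulting sum depends on $n$ but \emph{not} on the choice of the inner set $A_i$. The special moons $G$ and $H$ will contribute identically to the tally of components that Theorem~\ref{thm:outcomeelimp} uses to compute outcomes, so the two sums must have equal outcome for every $X$, giving $G \eqeq H$ by the definition of $\eqeq$ in Definition~\ref{def:eqeq}.

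First I would take an arbitrary $X \in \elimp$ and write it, using Definition~\ref{def:elimp} together with the reduction lemmas (Lemmas~\ref{lem:eqnimber}, \ref{lem:loopyeq}), in the normalized form of Theorem~\ref{thm:outcomeelimp}, namely as a sum of copies of $\moon$, nimbers $\nimber{k_i}$, special moons $\spmoon(n_j)_{A_j}$, and loopy components $\infty(B_\ell)$, with counts $b, s, t, u$. Then $G + X$ and $H + X$ are each again positions in $\elimp$ in this normalized form, the only difference being that $G$ (resp. $H$) adds one more special moon whose parameter is $n$; the $b, s, u$ counts and the nimber values and loopy sets are untouched, and the special-moon count increases by one to $t+1$ in both cases, with the contributed parameter equal to $n$ in both cases. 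I would then run through each of the five cases (i)--(v) of Theorem~\ref{thm:outcomeelimp} and observe that the outcome formula reads off $b$, $u$, the parity expression $k_1 \oplus \cdots \oplus k_s \oplus n_1 \oplus \cdots \oplus n_t$, and whether $t \geq 1$ --- every one of these is identical for $G+X$ and $H+X$, because adding $n$ to the xor-sum is the same operation in both cases and the inner sets $A_1, A_2$ never enter the formula. Hence $o(G+X) = o(H+X)$.

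The step I expect to require the most care is the translation from ``$X$ is an arbitrary element of $\elimp$'' to ``$X$ has the normalized form of Theorem~\ref{thm:outcomeelimp}.'' This is where I must invoke the earlier equivalence lemmas to replace a general sum of $\ims^\infty$ and $\limp$ components by the canonical sum of moons, nimbers, special moons, and a collection of $\infty(B)$ terms --- and crucially I must check that this rewriting is compatible with $\eqeq$ via Corollary~\ref{cor:eqeq}, so that computing the outcome of the normalized form genuinely computes the outcome of $G+X$ itself. Once that bookkeeping is in place, the case analysis is purely mechanical: the inner set $A$ of a special moon appears nowhere in the outcome determination of Theorem~\ref{thm:outcomeelimp}, which is exactly the phenomenon the corollary asserts. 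I would close by noting that since $X$ was arbitrary, $o(G+X) = o(H+X)$ for all $X \in \elimp$, which is the definition of $G \eqeq H$.
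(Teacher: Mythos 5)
Your proposal matches the paper's own argument: the paper proves this corollary in one line by observing that the outcome determination in Theorem~\ref{thm:outcomeelimp} never depends on the form of the inner sets $A_i$, which is exactly the mechanism you identify. Your additional care about normalizing an arbitrary $X \in \elimp$ into the canonical form via the earlier equivalence lemmas is a reasonable elaboration of a step the paper leaves implicit, but the route is essentially identical.
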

\begin{proof}
    In Theorem \ref{thm:outcomeelimp}, the outcome does not depends on the form of $A_i$. Thus, $G \eqeq H$.
\end{proof}

From now on, $\spmoon(n)_A$ will be written simply as $\spmoon(n)$.

\begin{theorem}
    The followings hold:
    $$\spmoon(n) + \nimber{k} \eqeq \spmoon(n \oplus k)$$
    $$\spmoon(n_1) + \spmoon(n_2) \eqeq \spmoon(n_1 \oplus n_2)$$
    $$\spmoon(n) + \infty(B) \eqeq \infty(n \oplus B).$$
    Here, for any integer $n$ and set $B$,  $n \oplus B = \{n \oplus b \mid b \in B\}$.
\end{theorem}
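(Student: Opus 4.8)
The plan is to deduce all three identities directly from the outcome dichotomy of Theorem~\ref{thm:outcomeelimp} together with the definition of $\eqeq$. By Definition~\ref{def:eqeq}, to establish $G \eqeq H$ it suffices to check $o(G+X) = o(H+X)$ for every $X \in \elimp$. Since $\eqeq$ is a congruence (Corollary~\ref{cor:eqeq}) and every summand of $X$ is $\eqeq$-equivalent either to a nimber, to $\moon$, to a special moon $\spmoon(n)$, or to some $\infty(B)$ (by Lemma~\ref{lem:eqnimber}, the loopy corollaries, and the special-moon corollary), I may assume without loss that $X$ is already in the canonical form
$$X \cong b \cdot \moon + \nimber{k_1} + \cdots + \nimber{k_s} + \spmoon(n_1) + \cdots + \spmoon(n_t) + \infty(B_1) + \cdots + \infty(B_u).$$
Then, for each identity, adding $X$ to the two sides produces two positions already in canonical form, and I simply read off and compare their outcomes from the five cases of Theorem~\ref{thm:outcomeelimp}.

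For the first identity, $\spmoon(n) + \nimber{k} + X$ and $\spmoon(n\oplus k) + X$ have the same number $b$ of moons, the same number $u$ of loopy components, and at least one special moon each, so they always fall into the same case. When the outcome is decided by a nim-sum (cases (iii) and (v), and the count governing (iv)), the deciding quantity is the nim-sum of all nimber and special-moon indices, and commutativity of $\oplus$ gives $k \oplus n \oplus (\text{rest}) = (n\oplus k) \oplus (\text{rest})$; hence the outcomes coincide. The second identity is handled identically: both $\spmoon(n_1)+\spmoon(n_2)+X$ and $\spmoon(n_1\oplus n_2)+X$ keep $b$ and $u$ fixed, retain at least one special moon, and share the deciding nim-sum $n_1 \oplus n_2 \oplus (\text{rest})$.

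The third identity is where the real care is needed, and I expect it to be the main obstacle. Here $\spmoon(n)+\infty(B)+X$ has $t+1$ special moons, whereas $\infty(n\oplus B)+X$ has only $t$; the two differ precisely in their special-moon count. The key point is that both positions carry the same number $u+1$ of loopy components, so they still fall into a common case: case~(i) if $b\geq 1$; and when $b=0$, case~(ii) (both $\mathcal{D}$) if $u+1\geq 2$, and case~(iii) if $u+1 = 1$. In case~(iii) the differing special-moon counts are irrelevant, since the outcome depends only on whether the nim-sum lies in the single loopy set; I would finish with the shift identity $x \in B \iff n\oplus x \in n\oplus B$, which turns the membership condition $k_1\oplus\cdots\oplus k_s\oplus n\oplus n_1\oplus\cdots\oplus n_t \in B$ for the left position into the condition $k_1\oplus\cdots\oplus k_s\oplus n_1\oplus\cdots\oplus n_t \in n\oplus B$ for the right. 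The crux is thus the observation that, once at least one loopy component is present, the outcome is insensitive to the number of special moons beyond their contribution to the nim-sum, so absorbing one special moon into a loopy set shifted by $n$ leaves every outcome unchanged.
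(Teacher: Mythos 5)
Your proposal is correct and follows essentially the same route as the paper: write an arbitrary $X \in \elimp$ in the canonical form $b\cdot\moon + \nimber{k_1}+\cdots+\infty(B_u)$, apply Theorem~\ref{thm:outcomeelimp} to both sides plus $X$, and match outcomes case by case using commutativity of $\oplus$ and the shift $x\in B \iff n\oplus x \in n\oplus B$. The paper works out only the first identity explicitly and declares the others similar, so your more careful treatment of the third identity (where the special-moon and loopy-component counts change) is a welcome elaboration rather than a deviation.
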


\begin{proof}
For any $X\cong b \cdot \moon + \nimber {k_1}  + \cdots + \nimber{k_s} + \spmoon(n_1)  + \cdots + \spmoon(n_t) + \infty(B_1) + \cdots + \infty(B_u)$, from Theorem \ref{thm:outcomeelimp}, 
if $b \geq 1$ then $o(\spmoon(n) + \nimber{k} + X) = \mathcal{N}$, if $b = 0$ and $u \geq 2$ then $o(\spmoon(n) + \nimber{k} + X) = \mathcal{D}$, if $b = 0$ and $ u = 1$ then 
$$o(\spmoon(n) + \nimber{k} + X) = \left \{ \begin{array}{ll}
            \mathcal{N} & (
            n \oplus k \oplus k_1 \oplus \cdots k_s  \oplus n_1 \oplus \cdots \oplus n_t \in B_1 )\\
            \mathcal{D} & (n \oplus k \oplus k_1 \oplus \cdots k_s \oplus n_1 \oplus \cdots \oplus n_t \not \in B_1 ). 
        \end{array} \right .$$,
        if $b = 0$ and $u = 0$ then $ o(\spmoon(n) + \nimber{k} + X) =\mathcal{N}$. 
        Similarly, we can determine $o(\spmoon(n \oplus k) + X)$ by using Theorem 
        \ref{thm:outcomeelimp} and it is exactly the same as $o(\spmoon(n) + \nimber{k} + X)$.

        $\spmoon(n_1) + \spmoon(n_2) \eqeq \spmoon(n_1 \oplus n_2)$ and $,\spmoon(n) + \infty(B) \eqeq \infty(n \oplus B)$ can be proved in similar ways.
\end{proof}

\begin{table}[htb]
    \centering
    \begin{tabular}{c|cccc}
         & $\moon$ & $\nimber{k_1}$ & $\spmoon(n_1)$ & $\infty(B_1)$  \\ \hline
        $\moon$ & $\moon$ & $\moon$ & $\moon$ & $\moon$ \\
        $\nimber{k_2}$ & $\moon$ & $\nimber{(k_1 \oplus k_2)}$ & $\spmoon(n_1 \oplus k_2)$ & $\infty(B_1 \oplus k_2)$ \\ 
        $\spmoon(n_2)$ & $\moon$ & $\spmoon(k_1 \oplus n_2)$ & $\spmoon{(n_1 \oplus n_2)}$ & $\infty(B_1 \oplus n_2)$ \\
        $\infty(B_2)$ & $\moon$ & $\infty(k_1 \oplus B_2)$ & $\infty(n_1 \oplus B_2)$ & $\infty(\emptyset)$
    \end{tabular}
    \caption{Sum of positions in $\elimp$.}
    \label{tab:elimpsummary}
\end{table}

\begin{corollary}
Let $k$ and $n$  be nonnegative integers and $B$ be a set of nonnegative integers. 
    Any position $G \in \elimp$ satisfies exactly one of $G \eqeq \nimber{k}, G \eqeq \moon, G \eqeq \spmoon(n),$ and $ G \eqeq \infty(B)$. That is, the equivalent class $\elimp /{\eqeq}$ is isomorphic to $\{ \nimber{k} \mid k \in \mathbb{N}_0 \} \cup \{\moon \} \cup \{ \spmoon(n) \mid n \in \mathbb{N}_0\} \cup\{\infty(B) \mid B \subsetneq \mathbb{N}_0\} $.
\end{corollary}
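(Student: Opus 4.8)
The plan is to establish the two halves of the statement separately: first, that every $G \in \elimp$ is $\eqeq$-equivalent to at least one of the four canonical forms (existence of a normal form), and second, that the four families are pairwise inequivalent with their parameters uniquely determined (the ``exactly one'' clause).

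For existence, I would start from the defining decomposition $G \cong G_1 + \cdots + G_n + H_1 + \cdots + H_m$ with $G_i \in \ims^\infty$ and $H_j \in \limp$. Each $G_i$ is, by Definition \ref{def:ims}, either $\nimber{k}$-equivalent in $\afimp^\infty$, or $\moon$, or a special moon $\spmoon(n)_A$; each $H_j$ is, by the loopy theory, $=_{\limp}$-equivalent to a nimber or to some $\infty(B)$. Using Lemma \ref{lem:eqnimber} (nimber equivalences transfer to $\eqeq$) together with the corollaries stating that $=_{\limp}$ and the special-moon identities also transfer to $\eqeq$, and using that $\eqeq$ is a congruence (Corollary \ref{cor:eqeq}), I may replace each summand by its canonical representative. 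This reduces $G$ to a sum of the shape displayed in Theorem \ref{thm:outcomeelimp}, namely $b\cdot\moon + \sum \nimber{k_i} + \sum \spmoon(n_j) + \sum \infty(B_\ell)$. I then apply the reduction identities in a fixed priority order (moon dominates, then loopy, then special moon, then nimber): $\moon$ absorbs everything (Theorem \ref{thm19} transferred to $\eqeq$); if $b=0$ and a loopy term is present, then $\infty(A)+\infty(B)\eqeq\infty(\emptyset)$, $\nimber{k}+\infty(B)\eqeq\infty(k\oplus B)$, and $\spmoon(n)+\infty(B)\eqeq\infty(n\oplus B)$ collapse $G$ to a single $\infty(\cdot)$; if $b=0$, $u=0$ and a special moon is present, then $\spmoon(n_1)+\spmoon(n_2)\eqeq\spmoon(n_1\oplus n_2)$ and $\spmoon(n)+\nimber{k}\eqeq\spmoon(n\oplus k)$ collapse $G$ to a single $\spmoon(\cdot)$; and otherwise $\nimber{k_1}+\nimber{k_2}\eqeq\nimber{k_1\oplus k_2}$ collapses $G$ to a single nimber.

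For uniqueness, I would exhibit, for each pair of classes, a test position $X$ on which the outcome functions differ, reading everything off Theorem \ref{thm:outcomeelimp}. Concretely: $\moon$ is characterized by $o(\moon+X)=\mathcal{N}$ for all $X$ (Corollary \ref{cor:moonandlimp}); each $\nimber{k}$ admits the test $X=\nimber{k}$ with outcome $\mathcal{P}$, whereas $\spmoon(n)$ never yields $\mathcal{P}$ (Lemma \ref{lem:spnd}) and neither does $\infty(B)$ (since any sum containing a loopy component lands in case (ii) or (iii) of Theorem \ref{thm:outcomeelimp}), which separates the nimbers from the other three families; two nimbers are separated by $o(\nimber{k}+\nimber{k})=\mathcal{P}$ versus $o(\nimber{k}+\nimber{k'})=\mathcal{N}$; $\spmoon(n)$ is separated from $\infty(B)$ because, as $B\subsetneq\mathbb{N}_0$, some $k\notin B$ gives $o(\infty(B)+\nimber{k})=\mathcal{D}$ while $o(\spmoon(n)+\nimber{k})=\mathcal{N}$; the parameters of $\spmoon(n)$ and $\spmoon(n')$ with $n\neq n'$ are separated by the test $\infty(\{n\})$, which sends $\spmoon(n)$ to outcome $\mathcal{N}$ (since $n\in\{n\}$) and $\spmoon(n')$ to $\mathcal{D}$ (since $n'\notin\{n\}$); and $\infty(B)\neq\infty(B')$ are separated by any nimber $\nimber{k}$ with $k$ in the symmetric difference of $B$ and $B'$. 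The main obstacle I anticipate is not any single computation but making the existence reduction rigorous: I must confirm that the chosen priority order of rewrite rules is confluent, so that the canonical form reached is independent of the order in which summands are combined, and in particular that the families are closed under the relevant sums exactly as recorded in Table \ref{tab:elimpsummary}. Once this bookkeeping and the $\eqeq$-preservation of each step (Corollary \ref{cor:eqeq}) are in place, the result follows.
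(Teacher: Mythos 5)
Your proposal is correct and follows the same route the paper intends for this corollary (which it leaves without an explicit proof): decompose $G$ into summands from $\ims^\infty$ and $\limp$, replace each by its canonical representative using Lemma \ref{lem:eqnimber} and the transfer corollaries, collapse via the addition identities of Table \ref{tab:elimpsummary}, and separate the resulting classes by the test positions read off Theorem \ref{thm:outcomeelimp}. Your worry about confluence of the rewrite order is unnecessary: since $\eqeq$ is a congruence (Corollary \ref{cor:eqeq}), every reduction sequence stays within one $\eqeq$-class, so the canonical form is determined by your pairwise-separation argument rather than by any property of the rewriting itself.
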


We summary the sum of positions in $\elimp$ in Table \ref{tab:elimpsummary}.

Now the algebraic structure of $\elimp$ has been completely revealed. On the other hand, there are some impartial positions which is not included in $\elimp$. Thus, our next extension should be include all impartial entailing positions. 

However, they are more complex.
For example, consider two positions
$$
G \cong \{ \nimber{2}, \nimber{3}, \{ \infty \mid \nimber{2}, \nimber{3} \} \mid \nimber{2}, \nimber{3} , \{\nimber{2}, \nimber{3} \mid \minfty\} \}
$$
$$
H \cong \{ \nimber{2}, \nimber{3}, \{ \infty \mid \nimber{2}\}, \{ \infty \mid \nimber{3} \} \mid \nimber{2}, \nimber{3} , \{\nimber{2} \mid \minfty \}, \{ \nimber{3} \mid \minfty\} \}.
$$
 These positions equal to $\moon$ in the framework of impartial entailing games and have very similar form. However, when we consider the
necessary and sufficient condition to be $\mathcal{N}$ positions when we add the positions and impartial loopy game $\infty(B), $

$$
o(G + \infty(B)) = \mathcal{N} \Longleftrightarrow \{\nimber{2}, \nimber{3}\} \subset B 
$$
and 
$$
o(H + \infty(B)) = \mathcal{N} \Longleftrightarrow \nimber{2} \in B \text{ or } \nimber{3} \in B,
$$
 which means $G$ and $H$ are not equivalent.

Thus, just by examining a slightly extended range of positions  from the special moon, we can assume that a more complex structure than that discussed here is spreading.

Let us turn back to Figure \ref{fig:trpiece}. The round piece has the same move as two-heap {\sc nim}. Thus, the two round pieces have values $\nimber{(2 \oplus 1)} = \nimber{3}$ and $\nimber{(4 \oplus 5)} = \nimber{1}$.

The triangle piece has the same move as an impartial loopy ruleset 3-{\sc keep-nim} introduced in \cite{ASS24}. Thus, position $(x,y)$ has the value
$$
\left \{ \begin{array}{cc}
\nimber{(x \oplus y)}  & (x \oplus y \leq 2) \\ 
   \infty(\{0, 1 ,2\}) & (x \oplus y \geq 3).
\end{array}\right.
$$

Therefore, the two triangle pieces has values $\nimber{(3\oplus 3)} = 0$ and $\infty (\{0, 1, 2\})$.

The square piece has the same move as two-heap {\sc nim} with carry-on move when one moves to a position whose number of all  stones is three. We call this ruleset 3-{\sc turn-keep-nim}.

The values are as Table \ref{tab:3teban} and it is determined by the following theorem.
\begin{theorem}
A position in 3-{\sc turn-keep-nim} $(x,y)$ has the value
$$
\left \{ \begin{array}{cc}
\nimber{(x \oplus y)}  & (x +  y \leq 3) \\
   \spmoon(3) & (x+y>3, \min(\{x,y\}) \leq 3) \\
   \nimber{((x-4) \oplus (y-4))} & (\min(\{x,y\}>3).
\end{array}\right.
$$.
\end{theorem}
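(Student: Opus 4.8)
The plan is to argue by strong induction on $x+y$, determining the full option structure of the single‑piece position $(x,y)$ in each of the three regions and then identifying its $\eqeq$-class. The key preliminary observation concerns what a move to a total‑$3$ position contributes: since the carry‑on is \emph{optional}, a move landing on $(x',y')$ with $x'+y'=3$ yields two options at once, the ``stop'' option $(x',y')$ and the ``carry‑on'' option $\{\infty \mid (x',y')\}$; moreover every position with $x'+y'=3$ has nim-value $\nimber 3$, so these two options are (equivalent to) $\nimber 3$ and $\{\infty \mid \nimber 3\}$. Every move not landing on total $3$ is quiet and contributes its plain value. So the first step is to record, for each region, exactly which moves reach total $3$ (hence produce the pair $\nimber 3,\{\infty\mid\nimber 3\}$) and where the quiet moves land.

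Regions $x+y\le 3$ and $\min(x,y)>3$ are the easy ones. If $x+y\le 3$ then any move strictly decreases an already-small total, so no move reaches total $3$ and there are no carry-on options; the position is an ordinary {\sc nim} sum and the standard Sprague--Grundy computation gives $\nimber{(x\oplus y)}$. If $x,y\ge 4$ then again no single move can reach total $3$ (the untouched heap keeps the total $\ge 4$), so all options are quiet: the moves keeping both coordinates $\ge 4$ are exactly the {\sc nim} reductions of the shifted position $(x-4,y-4)$ and carry the inductively-known values $\nimber{((x'-4)\oplus(y-4))}$, while the moves dropping one coordinate to $\le 3$ land in the middle region and are $\eqeq\spmoon(3)$ by induction. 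Writing $m=(x-4)\oplus(y-4)$, I would show $o((x,y)+\nimber m)=\mathcal P$ by giving the second player the {\sc nim}-pairing strategy on the shifted coordinates: a reduction by the opponent is answered by the balancing reduction, while any ``escape'' by the opponent to a $\spmoon(3)$-option leaves $\spmoon(3)+\nimber m\eqeq\spmoon(3\oplus m)$, an $\mathcal N$-position, on the second player's move. Since $o(\nimber m+\nimber m)=\mathcal P$, Theorem~\ref{thm:geqh} then yields $(x,y)\eqeq\nimber m$.

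The interesting region is $x+y>3$ with $\min(x,y)\le 3$. Here reducing the larger heap to $3-\min(x,y)$ reaches total $3$, so $(x,y)$ always has both $\nimber 3$ and $\{\infty\mid\nimber 3\}$ among its options; every remaining option is quiet and lands either in the region $x+y\le 3$ (value a nimber) or back in the middle region (value $\spmoon(3)$ by induction), and no option can reach the region $\min>3$. Thus $(x,y)$ has \emph{exactly} the option structure of a special moon $\spmoon(3)_{A'}$ with $\nimber 3\in A'$, so the expected value is $\spmoon(3)$.

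The hard part is turning this structural match into a genuine $\eqeq$-equivalence. The obstacle is that the options of $(x,y)$ are only \emph{equivalent} to the canonical nimbers and special moons appearing in $\spmoon(3)_{A'}$, not identical to them, and since special moons have no inverse (e.g. $\spmoon(3)+\spmoon(3)\eqeq\spmoon(0)\in\mathcal N$) the cancellation route of Theorem~\ref{thm:geqh} is unavailable. I therefore plan to establish $o((x,y)+X)=o(\spmoon(3)+X)$ for every $X\in\elimp$ directly, by re-running the outcome arguments of Lemmas~\ref{lem:spnd}, \ref{lem:lospnd}, and \ref{lem:loopyeq} with $(x,y)$ in the role of $\spmoon(3)$ and inducting on the number of special-moon and $\infty(\cdot)$ components of $X$. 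Those proofs use only three features of a special moon, namely the forcing option $\{\infty\mid\nimber 3\}$, the stop option $\nimber 3$, and the fact that all other options are equivalent to nimbers or special moons so that every follower stays classifiable by Theorem~\ref{thm:outcomeelimp}, and $(x,y)$ has all three. Matching these outcomes against the values computed for $\spmoon(3)+X$ in Theorem~\ref{thm:outcomeelimp} gives $(x,y)\eqeq\spmoon(3)$ and completes the induction.
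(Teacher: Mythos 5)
Your proposal is correct, and it follows the same skeleton as the paper's own proof: induction on $x+y$, the same three regions, and the same identification of the option structure (in particular, that a move landing on total $3$ contributes both the stop option $\nimber{3}$ and the check $\{\infty \mid \nimber{3}\}$, while all other moves are quiet). Where you genuinely diverge is in how that option structure is converted into a value. The paper simply reads off the options and declares the position to be $\spmoon(3)$ (resp.\ $\nimber{((x-4)\oplus(y-4))}$), implicitly invoking the principle that a position whose options are merely \emph{equivalent} to those of some $\spmoon(3)_A$ is itself $\eqeq \spmoon(3)$ --- a replacement-of-options step that is never stated or proved, and which is nontrivial here precisely because, as you note, special moons are not invertible, so Theorem \ref{thm:geqh} cannot be used to cancel them. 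You identify this as the crux and supply two concrete workarounds: in the region $\min(\{x,y\})>3$ you establish $o((x,y)+\nimber{m})=\mathcal{P}$ by an explicit pairing strategy on the shifted coordinates (handling escapes to $\spmoon(3)$-valued options via $\spmoon(3\oplus m)\in\mathcal{N}$) and then apply Theorem \ref{thm:geqh}, which is legitimate since nimbers are invertible; in the middle region you re-run the outcome arguments of Lemmas \ref{lem:spnd}, \ref{lem:lospnd} and \ref{lem:loopyeq} with $(x,y)$ playing the role of $\spmoon(3)$, correctly observing that those proofs consume only the stop option, the forcing check, and the classifiability of the remaining options. This costs more work than the paper's one-line assertions but actually closes a gap the paper leaves open. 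The one thing to make explicit when writing it up is that your induction hypothesis must be the full outcome statement $o((x',y')+X)=o(V(x',y')+X)$ for all $X\in\elimp$, not merely the value label, since that is exactly what the re-run lemmas and the escape analysis consume.
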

\begin{proof}
    We prove by induction on $x+y$.
    
    For the cases $x+ y\leq 3$, we can confirm individually.
    
    Assume that $x +y> 3$ and $\min(\{x, y\} \leq  3$. Then Left can move a position $(x, y) (x+y= 3)$, and choose whether she move again or end her move. Thus, Left has both $\nimber{3}$ and $\{ \infty \mid \nimber{3}\}$ as left options. Similarly, Right has both $\nimber{3}$ and $\{\nimber{3} \mid \minfty\}$ as Right options. Therefore, the value of the position is $\spmoon(3)$.
    
    When $\min(\{x,y\}) > 3$, by induction, options from $(x-4, y-4)$ of standard two-heap {\sc nim} and $\spmoon(3)$ are the options of the position. Therefore,  the value is $\nimber{((x-4)\oplus(y-4))}$. 
\end{proof}

From this theorem, the two square pieces have values $\spmoon(3)$ and $\nimber{((4-4) \oplus (5-4))} = \nimber{1}$.

\begin{table}[tb]
    \centering
    \begin{tabular}{c|ccccccccc}
         & 0 & 1 & 2 & 3 & 4 & 5 & 6 & 7 &  $\cdots$ \\ \hline
        0 & 0 & 1 & 2 & 3 & $\spmoon(3)$ & $\spmoon(3)$ & $\spmoon(3)$ & $\spmoon(3)$ & $\cdots$ \\
        1 & 1 & 0 & 3 & $\spmoon(3)$ & $\spmoon(3)$ & $\spmoon(3)$ & $\spmoon(3)$ & $\spmoon(3)$ & $\cdots$ \\
        
        2 & 2 & 3 & $\spmoon(3)$ & $\spmoon(3)$ & $\spmoon(3)$ & $\spmoon(3)$ & $\spmoon(3)$ & $\spmoon(3)$ & $\cdots$ \\
        3 & 3 & $\spmoon(3)$ & $\spmoon(3)$ & $\spmoon(3)$ & $\spmoon(3)$ & $\spmoon(3)$ & $\spmoon(3)$ & $\spmoon(3)$ & $\cdots$ \\
        4 & $\spmoon(3)$ & $\spmoon(3)$ & $\spmoon(3)$ & $\spmoon(3)$ & 0 & 1 & 2 & 3 & $\cdots$ \\
         5 & $\spmoon(3)$ & $\spmoon(3)$ & $\spmoon(3)$ & $\spmoon(3)$ & 1 & 0 & 3 & 2 & $\cdots$ \\
         6 & $\spmoon(3)$ & $\spmoon(3)$ & $\spmoon(3)$ & $\spmoon(3)$ & 2 & 3 & 0 & 1 & $\cdots$ \\
         7 & $\spmoon(3)$ & $\spmoon(3)$ & $\spmoon(3)$ & $\spmoon(3)$ & 3 & 2 & 1 & 0 & $\cdots$ \\
         $\vdots$ & $\vdots$ & $\vdots$ & $\vdots$ & $\vdots$ & $\vdots$ & $\vdots$ & $\vdots$ & $\vdots$ & $\ddots$
                
    \end{tabular}
    \caption{Values in {\sc 3-turn-keep-nim}.}
    \label{tab:3teban}
\end{table}

Therefore, in total, the position in Figure \ref{fig:trpiece} is 
$$
\nimber{3} + \nimber{1} + 0 + \infty(\{0, 1, 2\}) + \spmoon(3) + \nimber{1}.
$$
.

By using Table \ref{tab:elimpsummary}, we can calculate 
\begin{eqnarray}
    \nimber{3} + \nimber{1} + 0 + \infty(\{0, 1, 2\}) + \spmoon(3) + \nimber{1}. & \eqeq & \nimber{2} + \infty(\{0, 1, 2\}) + \spmoon(3) + \nimber{1} \nonumber \\ \nonumber
    & \eqeq &   \infty(\{2, 3, 0\}) + \spmoon(3) + \nimber{1} \\ \nonumber
    & \eqeq & \infty(\{1, 0, 3\})  + \nimber{1} \\ \nonumber
    & \eqeq & \infty(\{0, 1, 2\}). \nonumber
\end{eqnarray}
Thus, the position is an $\mathcal{N}$-position.

\section{Conclusion}

In this study, we consider the sum of impartial entailing games and impartial loopy games. We showed the structure of them and how to determine the outcome of such positions.  Since in this study we restrict the set of positions of entailing moves, in future study, we would like to remove the restriction and reveal the algebraic construction of the sum of any impartial entailing games and impartial loopy games.
We also would like to consider positions which has both impartial entailing positions and impartial loopy positions in their options.

\section*{Acknowledgment}

The author is really grateful to Professor Ko Sakai and Doctor Tomoaki Abuku for their valuable comments.

\end{document}